\crefname{hypothesis}{Hypothesis}{Hypotheses}
\newcommand{\EE}{\mathbb{E}}  
\newcommand{\PP}{\mathbb{P}}  
\newcommand{\RR}{\mathbb{R}}  %
\newcommand{\NNN}{\mathcal{N}}  
\newcommand{\NN}{\mathbb{N}}  
\newcommand{\AAA}{\mathcal{A}} 
\newcommand{\1}{\mathbf{1}} 
\newcommand{\eps}{\varepsilon}
\DeclareMathOperator{\cov}{Cov}
\DeclareMathOperator{\polylog}{polylog}
\DeclareMathOperator{\tr}{tr}
\DeclareMathOperator{\Tr}{Tr}
\DeclareMathOperator{\vecf}{vec}
\newcommand{\defeq}{\coloneqq}
\newcommand{\Ygauss}{Y_{\mathrm{Gauss}}}
\newcommand{\Xfree}{X_{\mathrm{free}}}
\newcommand{\NC}{\mathrm{NC}}
\newcommand{\maxV}{\vee}
\newcommand{\minV}{\wedge}
\newcommand{\flatta}[2]{\AAA_{[\,#1\,\mid\,#2\,]}}
\begin{document}

\newcommand\relatedversion{}

\title{\Large Random Matrices, Intrinsic Freeness, and Sharp Non-Asymptotic Inequalities\relatedversion}
    \author{Afonso S. Bandeira\thanks{ETH Z\"urich, Switzerland (\email{bandeira@math.ethz.ch}, \url{https://people.math.ethz.ch/\~ abandeira/}).}
}

\date{}

\maketitle

\begin{abstract} 
Random matrix theory has played a major role in several areas of pure and applied mathematics, as well as statistics, physics, and computer science. This lecture aims to describe the intrinsic freeness phenomenon and how it provides new easy-to-use sharp non-asymptotic bounds on the spectrum of general random matrices.
We will also present a couple of illustrative applications in high dimensional statistical inference.

This article accompanies a lecture that will be given by the author at the International Congress of Mathematicians in Philadelphia in the Summer of 2026. 
\end{abstract}

\section{Introduction.}

Random matrix theory has played a major role in several areas of pure and applied mathematics, as well as physics and computer science. Furthermore, these different interactions have motivated different lines of inquiry. While a complete historical account of the study of random matrices by various mathematical communities is beyond the scope of this short text, we start with an abridged description, before reaching the main objects of study of this survey.  

The connections between random matrices and statistics date back at least to the 1920s, when 
Wishart~\cite{Wishart1928} studied the distribution of sample covariance matrices of samples from a Gaussian distribution. In 1967 Mar\v{c}enko and Pastur~\cite{marchenko1967} worked out the eigenvalue distribution of these random matrices, now called Wishart matrices, this distribution is now known as the Mar\v{c}enko-Pastur distribution.

In the 1950s 
Wigner was interested in studying eigenvalues of certain matrices arising in nuclear physics and realized that one could instead study eigenvalues of large random Hermitian matrices, now called Wigner matrices~\cite{wigner1951}. In 1958, Wigner~\cite{wigner1958} showed that the spectral distribution of Wigner matrices converges to the celebrated semi-circular law. These discoveries have since motivated countless fascinating mathematical inquiries about the spectrum of many classes of random matrices, and have made random matrix theory a core part of mathematical physics.

The connections with computation also have a rich history. 
In a pair of seminal papers in 1947 and 1951, 
von Neumann and 
Goldstine~\cite{vonneumann1947,vonneumann1951}, were interested in studying potential cancellation effects in the accumulation of errors in numerical algorithms for solving linear systems on ``typical'' matrices. They refer to work of 
Bargmann that studies the condition number of certain matrices with random entries. In 1988 
Edelman~\cite{edelman1988} computed the asymptotics of the condition number of a matrix with independent Gaussian entries.~\footnote{In his PhD thesis, Edelman mentions that the paper of Bargmann that is referenced in footnote 24 of~\cite{vonneumann1947} is unlikely to be available.} 

Random matrices have beautiful connections with many other areas of Mathematics. Highlights include the connections with Analytic Number Theory and the Hilbert-Polya conjecture regarding the zeros of the Riemann zeta function (see~\cite{rudnick-sarnak1996,katz-sarnak1999}), and the work of Haagerup and Thorbj{\o}rnsen~\cite{HaagerupThorbjornsen2005} (which we will mention again below) that uses random matrix theory to solve a long standing question in operator algebras.

Random matrices have since been studied from several different perspectives.\footnote{There are several excellent monographs on Random Matrices, some of the author's favorite are~\cite{tao2012,andersonguionnetzeitouni-book-2010,Tro15:Introduction-Matrix}.} The most classical, following the line of work started by Wigner, is the asymptotic study of specific random matrix ensembles with strong symmetries, such Wigner matrices which correspond to self-adjoint random matrices with iid entries above the diagonal (and iid entries in the diagonal, potentially with a different distribution). This line of work has produced incredibly precise estimates on spectral properties of these classes of random matrices including local laws of eigenvalues and delocalization of eigenvectors under mild conditions on the entrywise distribution (see, e.g.,~\cite{erdosetal2010,taovu2010}).

\begin{definition}[Standard Wigner Matrix]\label{def:standardWignerMatrix}
We will call a $d\times d$ symmetric random matrix a standard Wigner matrix when the upper triangular entries are i.i.d. standard Gaussian, i.e. for all $1\leq i\leq j\leq d$ we have $W_{ij}\sim \NNN\big(0,\frac1d\big)$, all independent.\footnote{When the diagonal entries are distributed accordingly to $\NNN\big(0,\frac2d\big)$ and the off-diagonal ones as $\NNN\big(0,\frac1d\big)$ the matrix is invariant to orthogonal conjugation and this case is refereed to as the \emph{Gaussian Orthogonal Ensemble (GOE)}. We note that for a standard Wigner matrix, $\EE W^2 = I$.}
\end{definition}

In applications in applied mathematics, statistics and computer science one is often mostly concerned with extremal eigenvalues (or singular values) but requires non-asymptotic bounds that can be used in large, but fixed, dimension. A notable line of work (see this ICM 2010 publication by Rudelson and Vershynin~\cite{RudelsonVershynin2010}) develops non-asymptotic bounds for extremal singular values of random matrices with iid entries. These are often proved by writing extremal singular (or eigen) values as empirical processes (via the Courant-Fisher variational formula) and using geometric tools involving covering numbers. In many applications, however, the random matrices that need to be analyzed do not have independent entries and a different approach appears to be required.\footnote{It is an open question to give a proof of the Non-commutative Khintchine inequality (Theorem~\ref{thm:NCK-Ak}), even up to polylogarithmic factors, using a geometric approach, see~\cite{bandeira2025tensor}.}

A perspective able to handle matrices with dependent entries comes from operator space theory, the study of non-commutative Banach spaces~\cite{Pisier2003IntroductionTO}. It was in this context that the celebrated non-communitative Khintchine inequality of Lust-Piquard and Pisier~\cite{lustpiquard1986,lustpiquardpisier1991} was shown (see~\cite[\S 9.8]{Pisier2003IntroductionTO}).

\begin{theorem}[Non-commutative Khintchine inequality~\cite{lustpiquard1986,lustpiquardpisier1991,Pisier2003IntroductionTO}]\label{thm:NCK-Ak}
Let $A_1,\dots, A_n \in \RR^{d_1\times d_2}$ and $g_1,\dots,g_n$ iid $\NNN(0,1)$
\begin{equation}\label{eq:nck1}
\sigma \lesssim \EE\left\| \sum_{k=1}^n g_k A_k \right\| \lesssim \sqrt{\log(d_1+d_2)} \ \sigma, 
\end{equation}
where $\sigma^2 = \left\| \sum_{k=1}^n A_kA_k^\top \right\| \maxV \left\| \sum_{k=1}^n A_k^\top A_k \right\|$, and $a\maxV b$ denotes the maximum of $a$ and $b$.
\end{theorem}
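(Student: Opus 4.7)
The plan is to split the two-sided bound into a fairly soft lower bound and a harder upper bound; the latter will go through the matrix Laplace transform method, whose scalar analogue is the Chernoff bound. First I would pass to the self-adjoint case via the Hermitian dilation
\[\tilde{A}_k \defeq \begin{pmatrix} 0 & A_k \\ A_k^\top & 0 \end{pmatrix} \in \RR^{d\times d}, \qquad d\defeq d_1+d_2,\]
and set $X \defeq \sum_{k=1}^n g_k \tilde{A}_k$. Then $\|X\| = \|\sum_k g_k A_k\|$, the spectrum of $X$ is symmetric around zero so $\|X\|=\lambda_{\max}(X)$, and $\tilde A_k^2$ is block-diagonal with blocks $A_kA_k^\top$ and $A_k^\top A_k$, so $\|\sum_k \tilde A_k^2\|=\sigma^2$.

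For the lower bound, let $v$ be a top unit eigenvector of $\sum_k A_k^\top A_k$. Then $Y\defeq \sum_k g_k A_k v$ is a centered Gaussian vector in $\RR^{d_1}$ whose covariance has trace $v^\top(\sum_k A_k^\top A_k)v=\|\sum_k A_k^\top A_k\|$. The standard Gaussian comparison $\EE\|Y\|\gtrsim (\EE\|Y\|^2)^{1/2}$ (Kahane--Khintchine in a Banach space, or direct computation from the subgaussian concentration of $\|Y\|$) then gives
\[\EE\Big\|\sum_k g_k A_k\Big\|\geq \EE\|Y\|\gtrsim\Big\|\sum_k A_k^\top A_k\Big\|^{1/2},\]
and swapping the roles of $A_k$ and $A_k^\top$ yields the analogous bound with $\sum_k A_k A_k^\top$, so $\EE\|X\|\gtrsim \sigma$.

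For the upper bound, I would apply the matrix Chernoff argument: for $\theta>0$,
\[\PP(\|X\|\geq t)=\PP(\lambda_{\max}(X)\geq t) \leq e^{-\theta t}\EE\,\tr\, e^{\theta X},\]
using $e^{\theta\lambda_{\max}(X)}\leq \tr\,e^{\theta X}$. The decisive step is the matrix Gaussian MGF inequality
\[\EE\,\tr\,\exp\!\Big(\theta\sum_k g_k\tilde A_k\Big)\leq \tr\,\exp\!\Big(\tfrac{\theta^2}{2}\sum_k \tilde A_k^2\Big)\leq d\,\exp(\theta^2\sigma^2/2),\]
which I would establish by integrating out one $g_k$ at a time: conditionally on the others, Lieb's concavity theorem (the map $H\mapsto \tr\exp(L+\log H)$ is concave on the positive cone) combined with Jensen's inequality lets one replace $\EE_{g_k} e^{\theta g_k \tilde A_k}$ by the PSD majorant $e^{\theta^2\tilde A_k^2/2}$ under the trace, exactly as in the Ahlswede--Winter--Tropp framework. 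Optimising $\theta=t/\sigma^2$ yields $\PP(\|X\|\geq t)\leq d\exp(-t^2/(2\sigma^2))$, and integrating this tail, splitting at $t_0=\sigma\sqrt{2\log d}$, gives $\EE\|X\|\lesssim \sigma\sqrt{\log(d_1+d_2)}$.

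The main obstacle is the matrix MGF bound: the scalar identity $\EE e^{\theta g a}=e^{\theta^2 a^2/2}$ does not extend entrywise to matrices because $e^{A+B}\neq e^A e^B$ in general, so Lieb's concavity theorem (or an equivalent Golden--Thompson-style inequality) is essential. Everything else---the dilation, the Kahane--Khintchine lower bound, the Laplace transform step, and the tail integration---is soft; Lieb's theorem is the one genuinely non-elementary ingredient in the whole argument.
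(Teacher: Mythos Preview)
Your argument is correct, but for the upper bound it follows a genuinely different route from the paper. The paper proceeds via the moment method: it bounds $(\EE\|X\|)^{2p}\le d\,\EE\tr X^{2p}$, expands $\EE\tr X^{2p}$ with Wick's formula as a sum over pair partitions $\nu\in\PP_2[2p]$, and then invokes Buchholz's ``commutative is the worst case'' lemma to bound each summand by $\tr(\sum_k A_k^2)^p$; the factor $|\PP_2[2p]|=(2p-1)!!\le (2p)^p$ then yields $\EE\|X\|\le d^{1/(2p)}\sqrt{2p}\,\sigma$, and setting $p\approx\log d$ finishes. You instead run the matrix Laplace transform (Ahlswede--Winter--Tropp) with Lieb's concavity to get the Gaussian tail $\PP(\|X\|\ge t)\le d\,e^{-t^2/(2\sigma^2)}$ and integrate. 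Your route is arguably cleaner and gives a tail bound for free, but the paper's choice is deliberate: the Wick expansion is precisely what exposes the dichotomy between crossing and non-crossing pair partitions, which is the combinatorial engine behind the intrinsic freeness results that the rest of the paper develops. Your lower bound (which the paper does not prove) is fine; the step $\EE\|Y\|\gtrsim(\EE\|Y\|^2)^{1/2}$ is standard for Gaussian vectors.
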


Another important line of work studies sums of independent random matrices. This approach, referred to as \emph{Matrix Concentration}~\cite{Tro15:Introduction-Matrix}, dates back to work in Quantum Information Theory by Ahlswede and Winter in the early 2000s~\cite{AhlswedeWinter2002}. The idea is to use a matrix version of the ``Chernoff-trick'' and bound a matrix version of the moment generating function of a random matrix using operator inequalities, such as Golden-Thompson or Lieb's concavity. Inequalities such as the Matrix Bernstein Inequality of Oliveira and Tropp~\cite{Oliveira1-2010,Oliveira2-2010,Tro12:User-Friendly-Tail,Tro15:Introduction-Matrix} have found an incredible amount of applications and have become part of the standard toolbox of high dimensional probability theory.

\begin{theorem}[Matrix Bernstein~\cite{Oliveira1-2010,Oliveira2-2010,Tro12:User-Friendly-Tail,Tro15:Introduction-Matrix}]\label{thm:matrixBernstein}
 Let $\{H_k\}_{k=1}^n$ be a sequence of independent random symmetric $d\times d$ matrices. Assume that each $H_k$ satisfies:
 \[
  \mathbb{E} H_k = 0 \text{ and } \lambda_{\max}\left( H_k \right) \leq R \text{ almost surely.}
 \]
Then, for all $t\geq 0$,
\begin{equation}\label{eq:matrixbernstein}
 \PP\left[ \lambda_{\max}\left( \sum_{k=1}^n H_k \right) \geq t \right] \leq d \cdot \exp\left( \frac{-t^2}{2\sigma^2 + \frac23Rt} \right) \text{ where } \sigma^2 = \left\| \sum_{k=1}^n\mathbb{E}\left(H_k^2\right) \right\|.
\end{equation}
\end{theorem}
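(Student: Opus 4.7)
The plan is to follow the matrix Laplace transform method of Ahlswede--Winter and Tropp, letting $S = \sum_{k=1}^n H_k$ and working with $\lambda_{\max}(S)$ via Markov's inequality applied to the matrix exponential. The scalar Chernoff argument exploits that $e^{\theta x}$ turns sums into products and $\EE$ factorizes over independence; in the matrix case neither is true, so I will need replacements. Specifically, for any $\theta>0$ I would start from
\begin{equation*}
\PP\!\left[\lambda_{\max}(S)\geq t\right]
= \PP\!\left[e^{\theta\lambda_{\max}(S)}\geq e^{\theta t}\right]
\leq e^{-\theta t}\,\EE\,\tr\exp(\theta S),
\end{equation*}
using that $e^{\theta\lambda_{\max}(S)}=\lambda_{\max}(e^{\theta S})\leq \tr e^{\theta S}$ since $e^{\theta S}\succeq 0$.

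The heart of the argument is to replace the missing ``product structure'' of $\exp$. Here I would invoke Lieb's concavity theorem: the map $A\mapsto \tr\exp(C+\log A)$ is concave on positive-definite $A$. Applied inductively (conditioning on $H_1,\dots,H_{k-1}$ and using Jensen's inequality on $H_k$), this gives the subadditivity of matrix cumulants,
\begin{equation*}
\EE\,\tr\exp(\theta S)\;\leq\;\tr\exp\!\left(\sum_{k=1}^n \log \EE\, e^{\theta H_k}\right).
\end{equation*}
This is the single step I expect to be the main obstacle: reducing the sum inside the exponential to a sum of deterministic matrices without losing too much. With Lieb in hand, the rest is bookkeeping.

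Next I would bound each matrix cumulant. Using $\EE H_k=0$ and $H_k\preceq RI$ a.s., a scalar Taylor expansion argument shows $e^{\theta x}\leq 1+\theta x + g(\theta)\,x^2$ for $x\leq R$, where $g(\theta)=(e^{\theta R}-1-\theta R)/R^2$. Taking expectations yields $\EE e^{\theta H_k}\preceq I + g(\theta)\,\EE H_k^2\preceq \exp\!\big(g(\theta)\,\EE H_k^2\big)$, and by operator monotonicity of $\log$, $\log \EE e^{\theta H_k}\preceq g(\theta)\,\EE H_k^2$. Summing and using monotonicity of $A\mapsto \tr e^A$ in the Loewner order gives
\begin{equation*}
\tr\exp\!\left(\sum_{k=1}^n \log\EE e^{\theta H_k}\right)
\;\leq\; \tr\exp\!\Big(g(\theta)\textstyle\sum_k \EE H_k^2\Big)
\;\leq\; d\,\exp\!\big(g(\theta)\,\sigma^2\big).
\end{equation*}

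Combining, $\PP[\lambda_{\max}(S)\geq t]\leq d\,\exp(-\theta t+g(\theta)\sigma^2)$ for every $\theta>0$. Finally I would optimize in $\theta$. Choosing $\theta=R^{-1}\log(1+Rt/\sigma^2)$ is the slickest route, but to land exactly on the stated denominator $2\sigma^2+\tfrac23 Rt$ it is cleaner to use the elementary bound $g(\theta)\leq \theta^2/\big(2(1-\theta R/3)\big)$ valid for $0<\theta<3/R$, then minimize $-\theta t+\sigma^2\theta^2/(2-2\theta R/3)$ by choosing $\theta=t/(\sigma^2+Rt/3)$. Substituting this $\theta$ produces the exponent $-t^2/(2\sigma^2+\tfrac23 Rt)$, yielding \eqref{eq:matrixbernstein}.
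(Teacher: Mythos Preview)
The paper does not prove this theorem; it merely states it as a known background result with citations to Oliveira and Tropp. Your proposal reproduces exactly the standard Tropp argument (matrix Laplace transform, Lieb's concavity for subadditivity of matrix cumulants, the scalar bound $e^{\theta x}\le 1+\theta x+g(\theta)x^2$ on $x\le R$, and the optimization in $\theta$), and it is correct as written.
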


These two inequalities (Theorems~\ref{thm:NCK-Ak} and~\ref{thm:matrixBernstein}) are tightly connected. While~\eqref{eq:matrixbernstein} is a tail bound it is often best to use it as a bound on $\EE\|\sum H_k\|$ followed by scalar concentration inequalities~\cite[\S 1.6.5]{Tro15:Introduction-Matrix}. In fact, a standard application of Gaussian concentration provides a tail bound version of~\eqref{eq:nck1} with the deviations controlled by a weak variance parameter $\sigma_\ast(\AAA)$ that smaller (or equal) to $\sigma(\AAA)$ (see~\cite[\S\S 8, 9]{MDS-Book-2025} for a pedagogical treatment).
Furthermore, using symmetrization, Tropp~\cite{Tropp_MatrixConcentrationElementary} gave a proof for a $\EE\|\sum H_k\|$ version of~\eqref{eq:matrixbernstein} using~\eqref{eq:nck1}.\footnote{For $H_1,\dots,H_n$ $d\times d$ self-adjoint centered independent random matrices, symmetrization gives: $\EE\left\| \sum H_k \right\| \lesssim \EE_{H}\EE_{g} \left\| \sum g_k H_k \right\| \lesssim \sqrt{\log d}\, \EE_{H}  \left\| \sum_{k=1}^n H_k^2 \right\|^{\frac12}$, which can then be estimated (see~\cite{Tropp_MatrixConcentrationElementary,MDS-Book-2025}).} 

\begin{remark}[Hermitian dilation]\label{rem:HermitianDilation}
We note that even though we formulate some of the inequalities for self-adjoint matrices this is not restrictive as, given a non symmetric matrix $X\in\RR^{d_1\times d_2}$ one can use the inequalities on the Hermitian dilation $
\bigl[ \begin{smallmatrix} 0 & X \\ 
  X^\top & 0 \end{smallmatrix} \bigr]$. Note that $\|\bigl[ \begin{smallmatrix} 0 & X \\ 
  X^\top & 0 \end{smallmatrix} \bigr]\|=\|X\|$ and $\|\EE \bigl[ \begin{smallmatrix} 0 & X \\ 
  X^\top & 0 \end{smallmatrix} \bigr]^2\| = \|\EE X X^\top\| \maxV \|\EE X^\top X\|$. 
\end{remark}

The line of work that this article aims to describe starts with the observation that the dimensional factors in Theorems~\ref{thm:NCK-Ak} and~\ref{thm:matrixBernstein} are suboptimal in important examples. The reader is invited to try Theorem~\ref{thm:NCK-Ak} with two instructive examples, (i) a diagonal random matrix whose diagonal entries are iid $\NNN(0,1)$ and (ii) a Gaussian Wigner matrix: the first example shows that the dimensional factor cannot be removed always, while the second shows that the inequality is not sharp even in the classical example of Wigner matrices. The particular case of independent entries (which include these two examples) was relatively well understood in the mid 2010s~\cite{bandeira2016sharp,latala2018dimension}.

\subsection*{Aims and audience:} The main goal of this survey is to inspire graduate students in Mathematics and related areas to (i) work on improving our understanding of random matrices and/or (ii) use matrix concentration inequalities in their fields. With this in mind, it does not try to be exhaustive or heavy on details. It aims to showcase the most important ideas, while being light on the required background, and giving pointers for the interested reader to find more. The author hopes the reader enjoys reading it as much as he enjoyed writing it.

\subsection*{Outline}

After the introduction above, Section~\ref{sec:intrinsicfreeness} is the core of this survey: it explains the intrinsic freeness phenomenon, with asymptotic freeness addressed in \S\ref{sec:asymptoticfreeness} and non-asymptotic inequalities in \S\ref{sec:nonasymptoticfreeness}; universality is discussed in \S\ref{sec:universality}. Section~\ref{sec:extensionsapplications} is a brief description of some extensions and applications, deferring more comprehensive accounts of each topic to further references: \S\ref{sec:sharptransitions} and \S\ref{sec:matrixchaos} respectively discuss sharp phase transitions in random matrix models, and generalizations of matrix inequalities to matrix chaoses; \S\ref{sec:tensorPCA} briefly introduces the tensor PCA problem in the interface of high dimensional statistics and theoretical computer science and \S\ref{sec:kikuchi} and \S\ref{sec:soschaos} show two applications of the tools in this survey to this problem; lastly, an application of the intrinsic freeness phenomenon to the matrix Spencer conjecture is highlighted in \S\ref{sec:matrixspencer}. 

\subsection*{Notation} We make several notational choices: For $X$ a $d\times d$ matrix, $\|X\|$ denotes the spectral norm, $\Tr(X)$ denotes the trace $\Tr(X)=\sum_{i=1}^d X_{ii}$ and $\tr(X)$ the normalized trace $\tr(X)=\frac1d\Tr(X)$. $\mathbb{S}^{d-1}$ denotes the unit sphere in $\mathbb{R}^d$. In expressions such as $\tr X^p$ or $\EE X^p$ the power binds before the trace or expectation. $a\sim_q b$ means that there exists a constant $C_q>0$, potentially depending on $q$, such that $a/C_q\leq b\leq C_q$. We also us standard big-O notation, where $a_n=\Omega(b_n)$ means that $\limsup b_n/a_n\leq C$ for a constant $C>0$, and $a_n=o(b_n)$ that $\limsup a_n/b_n=0$, for positive sequences $a_n,b_n$. $\tilde{\Omega}$ indicates a possibility for hidden polylogarithmic factors. 



\section{Intrinsic Freeness}\label{sec:intrinsicfreeness}

Our main object of study in this section will be a $d \times d$ self-adjoint centered random matrix $X$ whose entries are jointly Gaussian.\footnote{While in our exposition we chose to treat matrices with real entries, the theory is essentially unchanged for complex valued matrices (by replacing $^\top$ by $^\ast$, and ``symmetric'' by ``Hermitian'').} Such a random matrix $X$ can always be written as $X = \sum_{k=1}^n g_k A_k$ for $A_1,\dots,A_k$ deterministic $d\times d$ symmetric matrices and $g_1,\dots,g_n$ iid $\NNN(0,1)$. Note that $\EE X^2 = \sum_{k=1}^n A_k^2$.

In order to show the source of the dimensional factor in matrix concentration inequalities, we will start with a proof of the Non-commutative Khintchine inequality (Theorem~\ref{thm:NCK-Ak}). The argument involves computing mixed moments of standard gaussians of the form $\EE[g_{u(1)}\cdots g_{u(p)}]$. The main from of cancellation arises from the fact that such moments can only be non-zero if each index appears an even number of times. These calculations are elegantly organized by the notion of pair partitions and Wick's formula (see~\cite[\S 8]{MDS-Book-2025} for a pedagogical treatment in the same notation; and Figure~\ref{fig:WicksFormula}).

\begin{definition}[Pair Partition]\label{def:pairpartition}
Given $k$ a positive integer, we define $\PP_2[k]$ as the set of partitions of $[k]$ into subsets of size $2$ each. If $k$ is odd then $\PP_2[k]$ is empty. Given a function $u$ on $[k]$ and a pair partition $\nu\in\PP_2[k]$ we say that $u$ is compatible with $\nu$, and write $u\sim\nu$ if for all sets $(i,j)\in\nu$ we have $u(i)=u(j)$. Given even $k=2p$ and a partition $\nu\in\PP_2[2p]$ we define the $\nu$-assignment $u_\nu:[2p]\to[p]$ as the surjective function that is compatible with $\nu$ and no other partition. Any of the $p!$ such functions works for our purposes, but we pick the first in lexicographic order.
\end{definition}

\begin{lemma}[Wick's formula]\label{lemma:Wicksformula}
Let $g_1,\dots,g_n$ be iid $\NNN(0,1)$ random variables and let $u:[2p]\to[n]$ then
\begin{equation}\label{eq:Wicksformula}
\EE[g_{u(1)}\cdots g_{u(2p)}] = \sum_{\nu\in \PP_2[2p]} 1_{u \sim \nu},
\end{equation}
where $\PP_2[2p]$ denotes a set of pair partitions, and $u \sim \nu$ means that the function $u$ is compatible with $\nu$. 
\end{lemma}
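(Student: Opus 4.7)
The plan is to prove Wick's formula by a direct combinatorial calculation, exploiting independence to reduce both sides to the same elementary expression in terms of the preimage sizes $m_i(u)=|u^{-1}(i)|$ for $i\in[n]$. An alternative route via Gaussian integration by parts plus induction on $p$ is available, but the direct count is cleaner given that the statement is formulated at the level of pair partitions.

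For the left-hand side, I would first group the $2p$ factors by the value of $u$, writing
\[
\EE[g_{u(1)}\cdots g_{u(2p)}] \;=\; \prod_{i=1}^n \EE\bigl[g_i^{m_i(u)}\bigr],
\]
using independence of the $g_i$. The scalar moment identity $\EE[g^{2m}]=(2m-1)!!$ (with $(-1)!!=1$) and the fact that odd moments vanish then give
\[
\EE[g_{u(1)}\cdots g_{u(2p)}] \;=\; \1_{\{m_i(u)\text{ even for all }i\}} \prod_{i=1}^n (m_i(u)-1)!!\,.
\]

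For the right-hand side, the key observation is that $u\sim\nu$ says exactly that $\nu$ refines the partition of $[2p]$ into the level sets $u^{-1}(1),\dots,u^{-1}(n)$. Hence a compatible $\nu$ decomposes uniquely as a disjoint union of pair partitions of each $u^{-1}(i)$. If some $m_i(u)$ is odd, no such $\nu$ exists; otherwise the number of pair partitions of a set of size $2m$ is $(2m-1)!!$, so
\[
\sum_{\nu\in\PP_2[2p]} \1_{u\sim\nu} \;=\; \1_{\{m_i(u)\text{ even for all }i\}} \prod_{i=1}^n (m_i(u)-1)!!\,,
\]
which matches the expression for the left-hand side and completes the proof.

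There is no real obstacle here: the only minor care needed is in bookkeeping the refinement step and in handling the degenerate case where some $m_i(u)$ is odd (both sides are then zero by the same mechanism). If one preferred an analytic derivation, the cleanest alternative is induction on $p$ using the Gaussian integration-by-parts identity $\EE[g_{u(2p)}F(g)]=\sum_{j<2p}\EE[g_{u(2p)}g_{u(j)}]\,\EE[\partial_{u(j)}F]$ applied to $F=g_{u(1)}\cdots g_{u(2p-1)}$; the resulting sum over $j$ matches the sum over the pair containing $2p$ in $\nu$, and the induction hypothesis handles the remaining $2p-2$ indices.
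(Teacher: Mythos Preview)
Your argument is correct and is the standard proof of Wick's formula. Note that the paper does not actually include a proof of this lemma; it is stated as a known fact, with the caption of Figure~\ref{fig:WicksFormula} briefly alluding to the same combinatorial identity you exploit (that the $q$-th moment of a standard Gaussian equals the number of perfect matchings on $q$ points, i.e.\ $(q-1)!!$ for even $q$).
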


\begin{figure}[h]
\centering
\begin{tikzpicture}
\node[left] (0,0) {$\nu_1 = ~$};

\draw[thick] (0,0) -- (3.5,0);
\foreach \i in {1,...,8}
{
         \draw[fill=black] (0.5*\i-0.5,0) circle (0.05);
}
{
\fill[gray!30, opacity=.6, rounded corners=5pt]
       (1.30,-0.2) rectangle (1.70,0.2);
\fill[gray!30, opacity=.6, rounded corners=5pt]
       (-0.20,-0.2) rectangle (0.20,0.2);
\fill[gray!30, opacity=.6, rounded corners=5pt]
       (2.80,-0.2) rectangle (3.20,0.2);
\fill[gray!30, opacity=.6, rounded corners=5pt]
       (3.30,-0.2) rectangle (3.70,0.2);
}
\begin{scope}
     \clip (-0.1,0) rectangle (3.6,1.1);
     \draw[thick] (1.5,-.68) circle (1.65);
     \draw[thick] (0.75,0) circle (0.25);
     \draw[thick] (2.5,0) circle (1);
     \draw[thick] (2.25,0) circle (0.25);
\end{scope}

\begin{scope}[xshift=5.5cm]
\node[left] (0,0) {$\nu_2 = ~$};

\draw[thick] (0,0) -- (3.5,0);
\foreach \i in {1,...,8}
{
         \draw[fill=black] (0.5*\i-0.5,0) circle (0.05);
}
{
\fill[gray!30, opacity=.6, rounded corners=5pt]
       (1.30,-0.2) rectangle (1.70,0.2);
\fill[gray!30, opacity=.6, rounded corners=5pt]
       (-0.20,-0.2) rectangle (0.20,0.2);
\fill[gray!30, opacity=.6, rounded corners=5pt]
       (2.80,-0.2) rectangle (3.20,0.2);
\fill[gray!30, opacity=.6, rounded corners=5pt]
       (3.30,-0.2) rectangle (3.70,0.2);
}
\begin{scope}
     \clip (-0.1,0) rectangle (3.6,1.4);
     \draw[thick] (1.75,-0.55) circle (1.85);
     \draw[thick] (0.75,0) circle (0.25);
     \draw[thick] (2.25,0) circle (0.75);
     \draw[thick] (2.25,0) circle (0.25);
\end{scope}
\end{scope}

\begin{scope}[xshift=11cm]
\node[left] (0,0) {$\nu_3 = ~$};

\draw[thick] (0,0) -- (3.5,0);
\foreach \i in {1,...,8}
{
         \draw[fill=black] (0.5*\i-0.5,0) circle (0.05);
}
{
\fill[gray!30, opacity=.6, rounded corners=5pt]
       (1.30,-0.2) rectangle (1.70,0.2);
\fill[gray!30, opacity=.6, rounded corners=5pt]
       (-0.20,-0.2) rectangle (0.20,0.2);
\fill[gray!30, opacity=.6, rounded corners=5pt]
       (2.80,-0.2) rectangle (3.20,0.2);
\fill[gray!30, opacity=.6, rounded corners=5pt]
       (3.30,-0.2) rectangle (3.70,0.2);
}

\begin{scope}
     \clip (-0.1,0) rectangle (3.6,1.4);
     \draw[thick] (0.75,0) circle (0.75);
     \draw[thick] (0.75,0) circle (0.25);
     \draw[thick] (3.25,0) circle (0.25);
     \draw[thick] (2.25,0) circle (0.25);
\end{scope}
\end{scope}

\end{tikzpicture}
\caption{Visualization of the three pairings in $\PP[8]$ compatible with $u:[8]\to[3]$ given by $u(1)=u(4)=u(7)=u(8)=1$, $u(2)=u(3)=2$, and $u(5)=u(6)=3$. The nodes $1,4,7,8$ are shaded. Indeed, $\EE g_1^4g_2^2g_3^2 = \EE g_1^4 = 3$ (by independence and the fact that $\EE g^2=1$ and $\EE g^4=3$ for a standard Gaussian). Wick's formula encodes the fact that $q$-th moment of a standard gaussian is given by the number of perfect matchings of a $K_q$ 
graph.}
\label{fig:WicksFormula}
\end{figure}

\begin{proof}[Proof of upper bound in Theorem~\ref{thm:NCK-Ak}]
Using Hermitian dilation (Remark~\ref{rem:HermitianDilation}) we reduce to the case of $d\times d$ symmetric matrices $X=\sum_{k=1}^ng_kA_k$. Let $p$ be a positive integer. By Jensen's inequality, $\left( \EE \|X\| \right)^{2p} \leq  \EE \|X\|^{2p} = \EE \left\|X^{2p}\right\|$. Since $X^{2p}\succeq 0$, the spectral norm is bounded by the trace $\left\|X^{2p}\right\|\leq d\tr \left( X^{2p} \right)$ (recall that $\tr(X) = \frac1d\Tr(X)$ denotes de normalized trace). We now focus on bounding $\EE \tr  X^{2p}.$ Using Wick's formula:

\begin{eqnarray}
\EE \tr X^{2p} &=& \sum_{u:[2p]\to[n]} \EE[g_{u(1)}\cdots g_{u(2p)}] \tr\left(A_{u(1)}\cdots A_{u(2p)}\right) \nonumber \\
&=& \sum_{u:[2p]\to[n]} \sum_{\nu\in \PP_2[2p]} 1_{u \sim \nu}\tr\left(A_{u(1)}\cdots A_{u(2p)}\right) 
= \sum_{\nu\in \PP_2[2p]}\sum_{\substack{u:[2p]\to[n] \\ u\sim\nu} } \tr\left(A_{u(1)}\cdots A_{u(2p)}\right).\label{eq:NCKproofEq1}
\end{eqnarray}
If the matrices $A_k$ were commutative then the summands 
$$\sum_{\substack{u:[2p]\to[n] \\ u\sim\nu} } \tr\left(A_{u(1)}\cdots A_{u(2p)}\right)$$ would coincide for all pair partitions $\nu$. The summand corresponding to $\nu_0 \defeq \{(1,2),(3,4),\dots,(2p-1,2p)\}$ is particularly elegant
\begin{equation}\label{eq:wickSummandNu0}
\sum_{\substack{u:[2p]\to[n] \\ u\sim\nu_0} } \tr\left(A_{u(1)}\cdots A_{u(2p)}\right) = \sum_{\substack{u:[p]\to[n] } } \tr\left(A_{u(1)}^2\cdots A_{u(p)}^2\right)=\tr\left( \sum_{k=1}^n A_k^2\right)^p,
\end{equation}
recall that the power binds before the trace.
An argument of Buchholz (Lemma~\ref{lemma:commutativeisworstWicks} below) shows a ``commutative is the worst-case'' inequality, in the sense that every summand is bounded by \eqref{eq:wickSummandNu0}. Together with \eqref{eq:NCKproofEq1} it gives:
\begin{equation}\label{eq:proofNCK-P2}
\EE \tr X^{2p} \leq \Big| \PP_2[2p] \Big| \tr\left( \sum_{k=1}^n A_k^2\right)^p.
\end{equation}
A simple combinatorial argument shows $\Big| \PP_2[2p] \Big| = (2p-1)!! \leq (2p)^p$. Since the normalized trace is bounded by the spectral norm, we have:
\[
\EE \|X\| \leq d^{\frac1{2p}}\left( \EE \tr X^{2p} \right)^{\frac1{2p}} \leq d^{\frac1{2p}}\left( (2p)^p \left\| \sum_{k=1}^n A_k^2\right\|^p \right)^{\frac1{2p}} = d^{\frac1{2p}}\sqrt{2p}\, \sigma(X),
\]
where $\sigma(X)^2 = \left\|\EE X^2\right\| = \left\|\sum_{k=1}^n A_k^2\right\|$. Taking $p=\lceil \log d \rceil$ finishes the argument.
\end{proof}

\begin{lemma}\label{lemma:commutativeisworstWicks}[Commutative is the worst-case~\cite{Buchholz01}]
For any $\nu\in\PP_2[2p]$ and $A_1\dots,A_{n}$ symmetric matrices
\[
\sum_{\substack{u:[2p]\to[n] \\ u\sim\nu} } \tr\left(A_{u(1)}\cdots A_{u(2p)}\right) \leq \tr\left( \sum_{k=1}^n A_k^2\right)^p.
\]
\end{lemma}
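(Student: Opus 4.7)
The plan is to reduce to non-crossing pair partitions in two steps: first, show that any non-crossing $\nu_{NC}\in\PP_2[2p]$ saturates the bound with equality, $\sum_{u \sim \nu_{NC}} \tr(A_{u(1)} \cdots A_{u(2p)}) = \tr(S^p)$ where $S := \sum_k A_k^2$; second, show that any pair partition with a crossing can be bounded by one with strictly fewer crossings. Iterating the second step until the partition is non-crossing, and combining with the first, yields the lemma.

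For the first step I would strengthen the claim to: for every non-crossing $\nu_{NC}\in\PP_2[2p]$ and every $d\times d$ symmetric $M$, $\sum_{u \sim \nu_{NC}} \tr(A_{u(1)} \cdots A_{u(2p)} M) = \tr(S^p M)$. This is proved by induction on $p$: a non-crossing pairing always contains an adjacent pair $(i,i+1)$ which, by cyclic invariance of the trace, we rotate to the boundary; summing over its shared index replaces $A_{u(i)} A_{u(i+1)}$ by $S$, and the remaining $2p-2$ positions carry a non-crossing pair partition to which the stronger hypothesis applies with boundary matrix $SM$. Setting $M = I$ extends the identity~\eqref{eq:wickSummandNu0} from $\nu_0$ to all non-crossing pair partitions.

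The second step is the principal one: if $\nu$ contains a crossing $(a,c),(b,d)$ with $a<b<c<d$, let $\nu'$ be obtained by replacing these with the nested pairs $(a,b),(c,d)$; I would show $T(\nu) \leq T(\nu')$, where $T(\mu) := \sum_{u \sim \mu} \tr(A_{u(1)} \cdots A_{u(2p)})$. The prototype is $p=2$, $\nu = \{(1,3),(2,4)\}$, where
\[
\tr(X^2 Y^2) - \tr(XYXY) = -\frac{1}{2}\tr\bigl([X,Y]^2\bigr) \geq 0
\]
for symmetric $X, Y$; the non-negativity follows because antisymmetry of $[X,Y]=XY-YX$ forces $[X,Y]^2 = -[X,Y]^\top [X,Y]$ to be negative semidefinite. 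For a general crossing, cyclic invariance isolates the four matrices at positions $a,b,c,d$, while the remaining pairs of $\nu$ contribute matrix factors on either side; an appropriate Cauchy--Schwarz for the Hilbert--Schmidt inner product $\langle P,Q\rangle = \tr(P^\top Q)$, arranged symmetrically over the remaining summations, yields the uncrossing bound. The main obstacle is precisely this bookkeeping when pairs of $\nu$ other than $(a,c),(b,d)$ interleave with positions $a,b,c,d$ arbitrarily; the clean formulation of Cauchy--Schwarz that survives summation over the remaining shared indices is the essential content of Buchholz's original argument~\cite{Buchholz01}.
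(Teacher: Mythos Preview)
The paper does not actually prove Lemma~\ref{lemma:commutativeisworstWicks}; it states it with attribution to Buchholz and uses it as a black box inside the proof of Theorem~\ref{thm:NCK-Ak}. So there is no in-paper argument to compare your proposal against.

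On the substance of your outline: Step~1 (the non-crossing case) is correct and cleanly done. The induction with the boundary matrix $M$ is the right strengthening, and your termination measure for the uncrossing iteration (implicit in ``strictly fewer crossings'') is fine --- e.g.\ the quantity $\sum_{\{i,j\}\in\nu}|i-j|$ strictly decreases under $(a,c),(b,d)\mapsto(a,b),(c,d)$.

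Step~2, however, is not a proof but a deferral. You state that a suitably arranged Cauchy--Schwarz ``yields the uncrossing bound'' and then immediately concede that the bookkeeping when other pairs of $\nu$ straddle the positions $a,b,c,d$ is ``the essential content of Buchholz's original argument''. That is precisely the point that needs to be established: once other pairs interleave, the blocks $M_1,\dots,M_4$ between the four marked positions share summation indices, and it is no longer obvious that any single Cauchy--Schwarz inequality controls $\sum_{i,j}\tr(A_i M_1 A_j M_2 A_i M_3 A_j M_4)$ by the uncrossed expression while respecting the remaining sums. Your $p=2$ prototype (where there are no other pairs) does not address this. So as written your proposal supplies Step~1 and the global architecture of an inductive argument, but leaves the actual inequality in Step~2 to the cited reference --- which is the same status the lemma has in the paper itself.
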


It is instructive to consider what we will refer to as the \emph{isotropic} case, when \( \EE X^2 = \sum_{k=1}^n A_k^2 = \sigma(X) I \) is a multiple of the identity.\footnote{Note that $\EE W^2=I$, for $W$ a standard Wigner matrix.} In that case, there are many pair partitions that match \eqref{eq:wickSummandNu0}: whenever there is an adjacent pair, it can be ``peeled-off'' in the sum and potentially make adjacent pairs that were not adjacent before; for example, if $\sum_{k=1}^n A_k^2 = \sigma(X)^2 I$ then, for $p=2$ and $\nu=\{(1,4),(2,3)\}$ we have
\[
\sum_{\substack{u:[4]\to[n] \\ u\sim\nu} } \tr\left(A_{u(1)}A_{u(2)}A_{u(3)}A_{u(4)}\right) = \sum_{u:[2]\to[n]  } \tr\left(A_{u(1)}A_{u(2)}A_{u(2)}A_{u(1)}\right) =
\sum_{u:[1]\to[n]  } \tr\left(A_{u(1)}\sigma(X)^2IA_{u(1)}\right) =
\sigma(X)^4.
\]
the pair partitions that can be fully ``peeled-off'' this way are precisely the so-called \emph{non-crossing} partitions (see Figure~\ref{fig:2pairings}).  

\begin{definition}[Crossing and Non-crossing Partitions]\label{partitions:crossing}
We say $\nu\in\PP_2[2p]$ is a crossing partition when it has pairs $(i_1,i_2)\in\nu$ and $(j_1,j_2)\in\nu$ such that $i_1< j_1 < i_2 < j_2$. Otherwise we say $\nu$ is \emph{non-crossing}. The set of non-crossing partition is denoted by $\NC_2[2p]\subset \PP_2[2p]$.
\end{definition}

The argument above suggests that if one is to improve Theorem~\eqref{thm:NCK-Ak} with extra cancellations, they ought to arise from crossings.
Indeed, if all summands corresponding to crossing partitions were suppressed, the super-exponential $\Big| \PP_2[2p] \Big| = (2p-1)!!$ factor in~\ref{eq:proofNCK-P2} would be replaced with $\Big| \NC_2[2p] \Big| \leq 4^p$ which would ultimately lead to a bound without a logarithmic factor (see the ``proof idea'' for Theorem~\ref{prop:Xfreesigma2sigma} below). 

In the second half of the 2010s, Tropp~\cite{Tro18:Second-Order-Matrix} had the key idea of quantify these cancellations
\footnote{The approach in~\cite{Tro18:Second-Order-Matrix} uses Gaussian integration by parts to build a recurrence to bound $\EE\tr X^{2p}$, where crossings also arise and are tightly connected to crossings in Wick's formula.}
with the following \emph{matrix alignment} parameter:
$$
	w(X) \defeq 
	\sup_{U,V,W\in U(d)} \|\mathbf{E}[X_1 U X_2 V X_1 W X_2]\|^{\frac{1}{4}}
	=
	\sup_{U,V,W\in U(d)}\Bigg\|
	\sum_{i,j=1}^n A_i U A_j V A_i W A_j\Bigg\|^{\frac{1}{4}},
$$
where $X_1,X_2$ are i.i.d.\ copies of $X$ and the supremum is taken over 
all (nonrandom) unitary $d \times d$ matrices $U,V,W$.
When all $A_i$ commute, $w(X)\ge 
\|\sum_{ij}A_iA_jA_iA_j\|^{\frac{1}{4}}= 
\|(\sum_{i}A_i^2)^2\|^{\frac{1}{4}}=\sigma(X)$, but if $w(X)\ll\sigma(X)$, we expect cancellations to arise from crossings. Tropp~\cite{Tro18:Second-Order-Matrix} used this quantity to show an improvement of Theorem~\ref{thm:NCK-Ak}: $\EE\|X\| \lesssim \log(d)^{\frac14}\sigma(X) + \log(d)^{\frac12}w(X)$ capturing cancellations arising from non-commutativity (while unfortunately still having a sometimes spurious dimensional factor). While the matrix alignment parameter $w(X)$ appears too difficult to compute in practice, the idea to use such parameters to control crossing cancellations plays a key role in the sequel.


\subsection{Asymptotic Freeness:}\label{sec:asymptoticfreeness}

Stepping back a few decades, cancellations in non-crossing partitions are at the heart of Free Probability~\cite{Voiculescu1991,NicaSpeicher-FreeProbability}, a theory introduced by Voiculescu in the 1980s to tackle problems in operator algebras. It is a non-commutative analogue of probability theory where the concept of \emph{Freeness} plays the role of a non-commutative version of independence.
The connection between free probability and random matrices dates back to the early 1990s when Voiculescu showed that random matrices drawn from certain distributions are asymptotically free~\cite{Voiculescu1991}. Indeed, for our purposes, we can view free probability as providing an asymptotic description of the behavior of Wigner matrices as their dimension grows. One of the central objects in free probability is the notion of a \emph{free semicircular family} $s_1,\dots,s_n$, together with a trace $\tau$ in the algebra they generate.\footnote{To keep the required background light we will not formally define the objects in free probability ($s_1,\dots,s_n$ are infinite dimensional operators) and just discuss them implicitly; a treatment of the content of this section where the objects are formally defined can be found in~\cite[\S 4.1]{bandeira2023free}, and for an introduction to free probability the author recommends the excellent book of Nica and Speicher~\cite{NicaSpeicher-FreeProbability}. See also Definition~\ref{def:Xfree}.}
A free semicircular family $s_1,\dots,s_n$ can be viewed as the limiting objects associated with $W_1^{(N)},\dots,W_n^{(N)}$, $N\times N$ independent standard Wigner matrices (recall Definition~\ref{def:standardWignerMatrix}) as $N\to\infty$.\footnote{In a certain sense, not unlike how Gaussian random variables, a central object in classical probability, can be viewed as the limiting object of binomial random variables. In fact, the semicircular spectral distribution $\frac1{2\pi}\sqrt{4-x^2}\,1_{|x|\leq 2}$ is the limiting distribution arising in the Free Central Limit Theorem (see, for example~\cite{NicaSpeicher-FreeProbability}.)}

Voiculescu's~\cite{Voiculescu1991} asymptotic freeness can then be written as
\begin{equation}\label{eq:VoiculescuAF}
    \lim_{N\to\infty}\EE\left[\tr P\left(W_1^{(N)},\dots,W_n^{(N)}\right) \right] = \tau\big(P\left(s_1,\dots,s_n\right)\big),
\end{equation}
where $P$ is a non-commutative polynomial. For example, $P(X,Y)=X^2Y^2-XYXY$ would reduce to the zero polynomial in a commutative algebra, but in a non-commutative setting it does not: for $X$ and $Y$ $d\times d$ matrices, $P(X,Y)$ is not the zero polynomial.\footnote{When dealing with non self-adjoint matrices (or operators) it makes sense to consider polynomials on $X,Y,\dots$ and their adjoints  $X^\ast,Y^\ast,\dots$. The natural context for this is a $C^{\ast}$ algebra (an algebra with a notion of a norm, and with an involution $\ast$ corresponding to taking the adjoint, satisfying several compatibility conditions), we will not require this formalism for our exposition and refer the interest reader to~\cite{Pisier2003IntroductionTO} and references therein (we note that even if $X$ and $Y$ are self-adjoint, $XY$ may not be).}

In 2005, Haagerup and Thorbj{\o}rnsen
\cite{HaagerupThorbjornsen2005} showed a significant strengthening of \eqref{eq:VoiculescuAF} by showing  \emph{strong asymptotic freeness} (convergence in norm)
\begin{equation}\label{eq:HT05-SAF}
    \lim_{N\to\infty}\EE\left\| P\left(W_1^{(N)},\dots,W_n^{(N)}\right) \right\| = \big\|P\left(s_1,\dots,s_n\right)\big\|.
\end{equation}

One way one can think of \eqref{eq:VoiculescuAF} is by looking at what it says about traces of mixed moments of large standard Wigner matrices (Proposition~\ref{prop:WicksWigner} can be proved directly\footnote{The calculations involved in proving Proposition~\ref{prop:WicksWigner} are particularly elegant when $W_1,\dots,W_n$ are GUEs, a unitary-invariant complex-valued version of Wigner matrices (see~\cite[\S 9.3.1]{MDS-Book-2025}).}). Recall that $\EE \big(W_k^{(N)}\big)^2=I$.
\begin{proposition}\label{prop:WicksWigner}
    Let $W_1,\dots,W_n$ be $N\times N$ independent standard Wigner matrices (recall Definition~\ref{def:standardWignerMatrix}). Given a partition $\nu\in\PP[2p]$ let $u_\nu$ be the $\nu$-assignment (see Definition~\ref{def:pairpartition}). We have
    \begin{equation}\label{eq:WignerNONcrossingpairings}
            \EE\left[\tr W_{u_\nu(1)}\cdots W_{u_\nu(2p)}\right]  = 1,
    \end{equation}
    if $\nu\in \NC[2p]$ is non-crossing, and 
\begin{equation}\label{eq:Wignercrossingpairings}
            \lim_{N\to\infty} \EE\left[\tr W_{u_\nu(1)}\cdots W_{u_\nu(2p)}\right] = 0,
    \end{equation}
if $\nu\in \PP[2p]\setminus \NC[2p]$ is crossing.

    Furthermore, $W_1,\dots,W_n$ enjoy (in the limit) a Wick's formula summing only over non-crossing partitions: For $u:[2p]\to[n]$ we have
    \begin{equation}\label{eq:WicksforWigner}
\lim_{N\to\infty}\EE\left[\tr W_{u(1)}\cdots W_{u(2p)}\right] = \sum_{\nu\in \NC_2[2p]} 1_{u \sim \nu}.
    \end{equation}
The identities \eqref{eq:WignerNONcrossingpairings}--\eqref{eq:WicksforWigner} hold for a free semicircular family $s_1,\dots,s_n$ (without needing to take a limit).
\end{proposition}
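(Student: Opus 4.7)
The plan is to combine two complementary ideas: an elementary ``peel-off'' argument that exploits independence (giving \eqref{eq:WignerNONcrossingpairings} exactly, not just in the limit), and a ribbon-graph / genus-counting argument for the asymptotic statements \eqref{eq:Wignercrossingpairings} and \eqref{eq:WicksforWigner}. The first identity is special in that $u_\nu$ assigns a distinct index to each pair of $\nu$, so the relevant Wigner matrix at each pair is independent of everything else in the product; once we give up this distinctness (general $u$ in \eqref{eq:WicksforWigner}) or fail to have planarity (crossing $\nu$ in \eqref{eq:Wignercrossingpairings}), we are forced to pass to the limit.

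For \eqref{eq:WignerNONcrossingpairings} I would proceed by induction on $p$, using that any non-crossing $\nu \in \NC_2[2p]$ must contain an innermost pair $(a,a+1)$ of consecutive indices. By the definition of the $\nu$-assignment, the value $k := u_\nu(a) = u_\nu(a+1)$ appears nowhere else in the product $W_{u_\nu(1)} \cdots W_{u_\nu(2p)}$, so $W_k$ is independent of every other factor. Conditioning on the rest and using the exact identity $\EE W_k^2 = I$ from Definition~\ref{def:standardWignerMatrix}, together with the cyclicity of the trace, the two adjacent $W_k$'s collapse to the identity and the expectation reduces to the same expression for $\nu \setminus \{(a,a+1)\}$, which is itself non-crossing. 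The base case $p=0$ gives $\tr I = 1$.

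For \eqref{eq:WicksforWigner} (with \eqref{eq:Wignercrossingpairings} as a special case) I would expand $\EE \tr W_{u(1)} \cdots W_{u(2p)} = N^{-1}\sum_{i_0,\dots,i_{2p-1}} \EE \prod_l (W_{u(l)})_{i_{l-1}i_l}$ and apply Wick's formula (Lemma~\ref{lemma:Wicksformula}) to the jointly Gaussian entries. The covariance $\EE[(W_k)_{ij}(W_k)_{ab}] = N^{-1}(\delta_{ia}\delta_{jb} + \delta_{ib}\delta_{ja})$ (modulo a diagonal correction of lower order) produces, for each pair partition $\pi$ compatible with $u$ and each of the two ``gluings'' at every pair, a ribbon graph on a surface. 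Each such configuration contributes $N^{-1}\cdot N^{-p}\cdot N^F = N^{F-p-1}$, where $F$ counts the faces of the ribbon graph. The key topological estimate is $F \le p+1$, with equality \emph{only} when $\pi$ is non-crossing and every gluing is chosen ``parallel'' (giving a planar, orientable embedding); this is the standard genus bound. In the limit $N \to \infty$ only these contributions survive, each equal to $1$, recovering \eqref{eq:WicksforWigner}. Specializing to $u = u_\nu$: by the minimality property in Definition~\ref{def:pairpartition}, the only $\pi$ compatible with $u_\nu$ is $\nu$ itself, so when $\nu$ is crossing we have $F \le p$ for every gluing choice and the whole expression is $O(1/N)$, proving \eqref{eq:Wignercrossingpairings}.

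For the free semicircular family $s_1,\ldots,s_n$ the three identities hold without any limit. The cleanest route is to note that the combinatorial construction of free semicircular variables (following Nica and Speicher) \emph{defines} the joint trace by the non-crossing Wick formula, so the analogues of \eqref{eq:WignerNONcrossingpairings} and \eqref{eq:Wignercrossingpairings} become tautologies and \eqref{eq:WicksforWigner} simply matches what is already an equality for $s_1,\ldots,s_n$. The main obstacle in the above plan is the topological estimate $F \le p+1$ together with its equality characterization: this is standard but requires a careful Euler-characteristic argument for ribbon graphs (and its extension to the possibly non-orientable surfaces arising from ``twisted'' gluings in the real symmetric case), which is what I would need to write out in detail.
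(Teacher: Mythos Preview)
The paper does not actually supply a proof of this proposition: it states the result and remarks only that it ``can be proved directly,'' with a footnote pointing to a textbook treatment and noting that the GUE case is particularly clean. So there is no detailed paper argument to compare against.

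Your proposal is correct and follows the standard route. The peel-off argument you give for \eqref{eq:WignerNONcrossingpairings} is precisely the mechanism the paper sketches (in the paragraph following Lemma~\ref{lemma:commutativeisworstWicks}) when discussing the isotropic case of the NCK proof, so your first step is very much in the spirit of the surrounding text. The ribbon-graph/genus expansion for \eqref{eq:Wignercrossingpairings} and \eqref{eq:WicksforWigner} is the textbook computation the footnote alludes to; your identification of the diagonal-variance correction and of the possibly non-orientable gluings in the real symmetric case are the right technical caveats, and the Euler-characteristic bound $F\le p+1$ (with equality forcing the sphere, hence planarity, hence $\pi$ non-crossing with all parallel gluings) goes through regardless of orientability. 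Nothing is missing.
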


\begin{figure}[h]
\centering
\begin{tikzpicture}
\node[left] (0,0) {$\nu_1 = ~$};

\draw[thick] (0,0) -- (3.5,0);
\foreach \i in {1,...,8}
{
         \draw[fill=black] (0.5*\i-0.5,0) circle (0.05);
}
\begin{scope}
     \clip (-0.1,0) rectangle (3.6,1.4);
     \draw[thick] (1.75,-0.55) circle (1.85);
     \draw[thick] (0.75,0) circle (0.25);
     \draw[thick] (2.25,0) circle (0.75);
     \draw[thick] (2.25,0) circle (0.25);
\end{scope}

\begin{scope}[xshift=8cm]
\node[left] (0,0) {$\nu_2 = ~$};

\draw[thick] (0,0) -- (3.5,0);
\foreach \i in {1,...,8}
{
         \draw[fill=black] (0.5*\i-0.5,0) circle (0.05);
}
\begin{scope}
     \clip (-0.1,0) rectangle (3.6,1.1);
     \draw[thick] (1.5,-.68) circle (1.65);
     \draw[thick] (0.75,0) circle (0.25);
     \draw[thick] (2.5,0) circle (1);
     \draw[thick] (2.25,0) circle (0.25);
\end{scope}
\end{scope}
\end{tikzpicture}
\caption{Two examples of pairing on 8 elements, $\nu_1\in \NC[8]$ is non-crossing and $\nu_2\in \PP[8]\setminus\NC[8]$ is crossing. According to Proposition~\ref{prop:WicksWigner}, the mixed moments of large standard Wigner matrices represented by $\nu_1$ is $1$, and the one corresponding to $\nu_2$ is vanishing.\label{fig:2pairings}}
\end{figure}
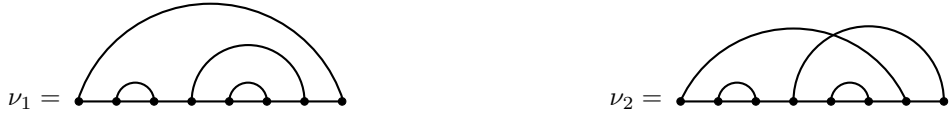

We are now ready to introduce one of the key objects in this survey, a non-commutative analogue of the random matrix model we are interested in $X$. We formulate it below for non-centered and not necessarily self-adjoint matrices, but at times restrict the exposition to the centered and self-adjoint case for simplicity.

\begin{definition}[$\Xfree$]\label{def:Xfree}
   Let $A_0,A_1,\dots,A_n$ be $d\times d$ matrices. Let $W_1^{(N)},\dots,W_n^{(N)}$ be independent standard Wigner matrices, and let $s_1,\dots,s_n$ be a free semicircular family.
\begin{equation}
    X = A_0 + \sum_{k=1}^n g_k A_k, \qquad
    X^{(N)} = A_0\otimes I + \sum_{k=1}^n A_k \otimes W_k^{(N)}, \qquad \Xfree = 
 A_0\otimes \1 + \sum_{k=1}^n A_k \otimes s_k
\end{equation}    
    where $\otimes$ denotes the tensor product ($X^{(N)}$ is an $Nd\times Nd$ matrix), $I$ is a $N
    \times N$ identity, and $\1$ is the identity in the algebra generated by the semicircular family.
\end{definition}

Asymptotic freeness~\eqref{eq:VoiculescuAF} and~\eqref{eq:HT05-SAF} tells us that, as $N\to\infty$, the spectrum of $X^{(N)}$ is well described by the one of $\Xfree$. Respectively, they state that $\lim_{N\to\infty}\EE\tr \big(X^{(N)}\big)^p = (\tr\otimes\tau)\big(\Xfree\big)$ and $\lim_{N\to\infty}\EE \big\|X^{(N)}\big\| = \big\|\Xfree\big\|$.


Another important ingredient is that $\Xfree$ satisfies~\eqref{thm:NCK-Ak} without dimensional factors.

\begin{proposition}\label{prop:Xfreesigma2sigma}[\cite[Theorem 9.9.5]{Pisier2003IntroductionTO}]
Let $X$ be a centered self-adjoint gaussian matrix and $\sigma(X)^2=\|\EE X^2\|$ then
\[
\sigma \leq \big\| \Xfree \big\| \leq 2\sigma.
\]
\end{proposition}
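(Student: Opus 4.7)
The plan is to prove the two inequalities separately; the lower bound is almost immediate once one passes to $\EE X^2$ via a partial trace, and the upper bound is a free analogue of the Non-commutative Khintchine proof given earlier, where the classical Wick formula is replaced by its non-crossing version from Proposition~\ref{prop:WicksWigner}.

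\textbf{Lower bound.} I would exploit the fact that the partial trace $\mathrm{id}_{M_d}\otimes\tau$ recovers $\EE X^2$ from $\Xfree^2$. Since Proposition~\ref{prop:WicksWigner} (applied with $p=1$) gives $\tau(s_js_k)=\delta_{jk}$, a direct expansion yields
\[
(\mathrm{id}_{M_d}\otimes\tau)(\Xfree^2) \;=\; \sum_{j,k}A_jA_k\,\tau(s_js_k) \;=\; \sum_k A_k^2 \;=\; \EE X^2.
\]
Because $\mathrm{id}_{M_d}\otimes\tau$ is a unital completely positive (conditional expectation) map into $M_d$, it is contractive in operator norm, so $\sigma(X)^2 = \|\EE X^2\| \leq \|\Xfree^2\| = \|\Xfree\|^2$.

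\textbf{Upper bound.} In the tracial $C^*$-algebra generated by $\1\otimes s_1,\dots,\1\otimes s_n$ and the matrix coefficients, equipped with the faithful trace $\tr\otimes\tau$, I would use the free Wick formula (Proposition~\ref{prop:WicksWigner}, which holds exactly for the semicircular family, not merely in the limit) to write
\[
(\tr\otimes\tau)(\Xfree^{2p}) \;=\; \sum_{u:[2p]\to[n]}\tr(A_{u(1)}\cdots A_{u(2p)})\,\tau(s_{u(1)}\cdots s_{u(2p)}) \;=\; \sum_{\nu\in\NC_2[2p]}\sum_{u\sim\nu}\tr(A_{u(1)}\cdots A_{u(2p)}).
\]
Applying Buchholz's commutative-is-worst-case bound (Lemma~\ref{lemma:commutativeisworstWicks}) to each $\nu\in\NC_2[2p]\subseteq\PP_2[2p]$ and using $\tr(Y^p)\leq\|Y\|^p$ for PSD $Y$ gives
\[
(\tr\otimes\tau)(\Xfree^{2p})\;\leq\; |\NC_2[2p]|\,\sigma(X)^{2p} \;=\; C_p\,\sigma(X)^{2p}\;\leq\;4^{p}\sigma(X)^{2p},
\]
where $C_p=\frac{1}{p+1}\binom{2p}{p}$ is the Catalan number. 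Finally I would invoke the standard identity $\|Y\|=\lim_{p\to\infty}(\tr\otimes\tau)(Y^p)^{1/p}$ for a positive operator in a finite von Neumann algebra with faithful trace, applied to $Y=\Xfree^2$, to conclude
\[
\|\Xfree\| \;=\; \lim_{p\to\infty}\bigl((\tr\otimes\tau)(\Xfree^{2p})\bigr)^{1/(2p)} \;\leq\; \lim_{p\to\infty}C_p^{1/(2p)}\,\sigma(X) \;=\; 2\sigma(X).
\]

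\textbf{Main obstacle.} The computational steps are light; the technical content is the two ``background'' facts used at the end, namely that the free Wick formula transfers from $\tau$ to $\tr\otimes\tau$ acting on $M_d$-valued coefficients (a routine multilinearity argument) and that the operator norm in the generated von Neumann algebra equals the $L^{2p}$-moment limit of the faithful trace $\tr\otimes\tau$. Both are standard $C^*$-algebraic facts; the slight discomfort is simply that they were not developed explicitly in the exposition, and so one either cites them from Nica--Speicher or Pisier or invokes Haagerup--Thorbj{\o}rnsen's strong asymptotic freeness to reduce to a finite-dimensional moment-versus-norm estimate as $N\to\infty$.
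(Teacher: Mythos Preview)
Your proposal is correct and, for the upper bound, follows exactly the route sketched in the paper's proof idea: replace $|\PP_2[2p]|$ by $|\NC_2[2p]|\leq 4^p$ via the free Wick formula and Buchholz's lemma, then pass to the limit in $p$. The paper's sketch does not address the lower bound, but your partial-trace/conditional-expectation argument is standard and correct.
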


\begin{proofidea}[Proof idea for the upper bound]
    There are several proofs of this estimate, for example it can be directly obtained from Lehner's formula below (Proposition~\ref{prop:LehnerFormula}). 
    Nevertheless, we find it particularly illuminating to recall the proof of Theorem~\ref{thm:NCK-Ak} above and notice that for $\Xfree$ (or $X^{(N)}$ in limit $N\to\infty$) the number of pair partitions $\big|\PP_2[2p]\big|$ in~\eqref{eq:proofNCK-P2} would be replaced by the number of non-crossing pair partitions $\big|\NC_2[2p]\big|$. The number of non-crossing pair partitions are given by the Catalan numbers and so $\big|\NC_2[2p]\big|< 4^p$. This means that the factor $\big((2p)^p\big)^{\frac1{2p}} = \sqrt{2p}$ would be replaced by $\big((4)^p\big)^{\frac1{2p}} = 2$.
\end{proofidea}

The following is a particularly useful estimate:
\begin{lemma}\label{lemma:pisierXfreebound}[Pisier~\cite[\S 9.9]{Pisier2003IntroductionTO}; see also~\cite[Lemma~2.5, \S 4.1]{bandeira2023free})]
    Let $A_0,A_1,\dots,A_n$ be $d\times d$ matrices and $\Xfree$ as in Definition~\ref{def:Xfree}:
\begin{equation}\label{eq:PisierXfreebound}
\frac12\Big(\|A_0\| \maxV \sigma(X)\Big) \leq \big\| \Xfree \big\| \leq \|A_0\| + \left\| \sum_{k=1}^n A_kA_k^\top \right\|^{\frac12} + \left\| \sum_{k=1}^n A_k^\top A_k \right\|^{\frac12}.
\end{equation}
\end{lemma}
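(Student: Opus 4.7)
The plan is to handle the two directions separately. For the lower bound I would exploit contractivity of the ``partial trace'' map $\mathrm{id}\otimes\tau$, and for the upper bound I would decompose the free semicircular family via its Fock-space realization $s_k=\ell_k+\ell_k^*$ in terms of free creation operators.

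First, I would consider the map $\phi\defeq \mathrm{id}_{M_d}\otimes\tau$, which is unital and completely positive, hence contractive on general elements and on positive ones. Using $\tau(s_k)=0$ and $\tau(s_j s_k)=\delta_{jk}$ (which is freeness together with the variance normalization), a short calculation yields
\[
\phi(\Xfree)=A_0,\qquad \phi\big(\Xfree\Xfree^*\big)=A_0A_0^*+\sum_{k=1}^n A_kA_k^\top,\qquad \phi\big(\Xfree^*\Xfree\big)=A_0^*A_0+\sum_{k=1}^n A_k^\top A_k.
\]
Contractivity applied to the first identity gives $\|A_0\|\le\|\Xfree\|$. Applied to the second and third, and noting that the norm of a sum of PSD matrices dominates the norm of each summand, it gives $\|\sum_k A_kA_k^\top\|\vee\|\sum_k A_k^\top A_k\|\le\|\Xfree\|^2$, i.e.\ $\sigma(X)\le\|\Xfree\|$. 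Combining, $\|\Xfree\|\ge \|A_0\|\vee\sigma(X)$, which is even slightly stronger than the stated bound with the factor $\tfrac12$.

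Next, for the upper bound, the triangle inequality separates the deterministic term: $\|\Xfree\|\le \|A_0\otimes\1\|+\|\sum_k A_k\otimes s_k\|=\|A_0\|+\|\sum_k A_k\otimes s_k\|$. To control the centered piece I would invoke the realization $s_k=\ell_k+\ell_k^*$ on the free Fock space, where the $\ell_k$ are free creation operators satisfying the key identity $\ell_j^*\ell_k=\delta_{jk}\1$. This makes the product
\[
\Big(\sum_k A_k\otimes\ell_k\Big)^*\Big(\sum_k A_k\otimes\ell_k\Big)=\Big(\sum_k A_k^\top A_k\Big)\otimes\1
\]
collapse, and analogously $(\sum_k A_k\otimes\ell_k^*)(\sum_k A_k\otimes\ell_k^*)^*=(\sum_k A_kA_k^\top)\otimes\1$. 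Taking norms and a final triangle inequality on $s_k=\ell_k+\ell_k^*$ delivers the upper bound with constant $1$ in front of each of $\|\sum_k A_kA_k^\top\|^{1/2}$ and $\|\sum_k A_k^\top A_k\|^{1/2}$, as claimed.

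The main obstacle is really conceptual rather than computational: one needs the free Fock-space model to make rigorous both $\ell_j^*\ell_k=\delta_{jk}\1$ and the fact that $\ell_k+\ell_k^*$ is indeed a standard free semicircular. If one wanted to avoid this setup, a direct Hermitian dilation $\widetilde A_k=\bigl[\begin{smallmatrix}0&A_k\\A_k^\top&0\end{smallmatrix}\bigr]$ combined with Proposition~\ref{prop:Xfreesigma2sigma} gives an analogous inequality but with a worse constant ($2\max$ instead of $\|R\|+\|C\|$); recovering the sharp form seems to require the Fock-space computation, which I would cite from \cite[\S~9.9]{Pisier2003IntroductionTO}.
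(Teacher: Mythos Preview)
Your proposal is correct and follows precisely the standard argument the paper defers to: the paper does not give its own proof but cites Pisier \cite[\S 9.9]{Pisier2003IntroductionTO} and \cite[Lemma~2.5, \S 4.1]{bandeira2023free}, and both use exactly the Fock-space decomposition $s_k=\ell_k+\ell_k^*$ with $\ell_j^*\ell_k=\delta_{jk}\1$ for the upper bound and the contractivity of $\mathrm{id}\otimes\tau$ for the lower bound. Your observation that the lower bound actually holds without the factor $\tfrac12$ is also correct.
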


In fact, $\|\Xfree\|$ enjoys a remarkable exact formula~\cite{LehnerFormula1999}, that can be written as a semidefinite program~\cite{kunisky-lehner}.

\begin{proposition}\label{prop:LehnerFormula}[Lehner's formula~\cite{LehnerFormula1999} (see also~\cite[Lemma~2.4]{bandeira2023free})]

Let $A_0,A_1,\dots,A_n$ be self-adjoint matrices and $\Xfree$ as in Definition~\ref{def:Xfree}:
\begin{equation}\label{eq:LehnerFormula}
\big\| \Xfree \big\| = \max_{\eps\in\{\pm1\}} \inf_{Z\succeq 0} \lambda_{\max}\left(Z^{-1} + \eps A_0 + \sum_{k=1}^n A_k Z A_k \right).
\end{equation}
\end{proposition}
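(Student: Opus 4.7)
The plan is to reduce $\bigl\|\Xfree\bigr\|$ to two $\lambda_{\max}$ computations and then invoke the matrix Dyson equation from operator-valued free probability that governs the resolvent of $\Xfree$.

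First, I would dispose of the $\eps\in\{\pm 1\}$. Since $(-s_1,\dots,-s_n)$ is again a free semicircular family, the self-adjoint operator $-\Xfree$ has the same $C^*$-distribution as the operator one obtains by flipping the sign of $A_0$. Combined with $\bigl\|\Xfree\bigr\|=\lambda_{\max}(\Xfree)\maxV\lambda_{\max}(-\Xfree)$, this means it suffices to prove the sign-free identity
\[
\lambda_{\max}\bigl(\Xfree\bigr) \;=\; \inf_{Z\succ 0}\lambda_{\max}\!\bigl(Z^{-1}+A_0+\eta(Z)\bigr),\qquad \eta(Z):=\sum_{k=1}^n A_k Z A_k,
\]
and then apply the result to both $A_0$ and $-A_0$; the outer $\max_{\eps}$ reassembles the norm.

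The central input is the \emph{matrix Dyson equation}: for every real $z>\lambda_{\max}(\Xfree)$, the matrix-valued Cauchy transform $G(z):=(\mathrm{id}_d\otimes\tau)\bigl[(z\,\1-\Xfree)^{-1}\bigr]$ satisfies $G(z)\succ 0$ and
\[
G(z)\bigl(zI-A_0-\eta(G(z))\bigr)=I,
\]
equivalently $zI = G(z)^{-1}+A_0+\eta(G(z))$. This follows from Voiculescu's operator-valued $R$-transform applied to the semicircular family (whose amalgamated $R$-transform is precisely $\eta$), or equivalently from Schwinger--Dyson integration by parts against each $s_k$. The companion fact I would need is the \emph{resolvent characterization}: the equation $M=(zI-A_0-\eta(M))^{-1}$ admits a positive semidefinite solution if and only if $z\ge \lambda_{\max}(\Xfree)$.

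For the upper bound $\inf_Z\lambda_{\max}(\cdot)\le \lambda_{\max}(\Xfree)$, I simply plug $Z=G(z)$ into the Dyson equation: the right-hand side becomes $zI$, so $\lambda_{\max}(Z^{-1}+A_0+\eta(Z))=z$, and letting $z\downarrow \lambda_{\max}(\Xfree)$ gives the inequality. For the reverse inequality, given any $Z\succ 0$ and $z:=\lambda_{\max}(Z^{-1}+A_0+\eta(Z))$, one has $zI-A_0-\eta(Z)\succeq Z^{-1}\succ 0$, so the map $\mathcal{T}(W):=(zI-A_0-\eta(W))^{-1}$ is well-defined at $Z$. Because $\eta$ is completely positive and $W\mapsto W^{-1}$ is operator-monotone decreasing on the positive cone, an easy induction shows the iterates $Z_0:=Z$, $Z_{j+1}:=\mathcal{T}(Z_j)$ form a monotone decreasing sequence of positive matrices; it converges to a positive fixed point $G^\star$ of the Dyson equation at parameter $z$. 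The resolvent characterization then forces $z\ge \lambda_{\max}(\Xfree)$, closing the argument.

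The main obstacle is precisely the resolvent characterization. The direction ``positive solution exists for $z>\lambda_{\max}(\Xfree)$'' is easy, as the Stieltjes transform provides one. The harder direction is non-existence of positive solutions below the spectrum, which genuinely uses the non-commutative structure, typically through a Perron--Frobenius argument on the completely positive operator $\eta$ together with continuity of the $M_d$-valued Stieltjes transform as $z$ approaches the spectral edge from above. Once this spectral-theoretic input is in place, the rest is an elementary fixed-point / SDP-duality manipulation, and the variational formulation of Proposition~\ref{prop:LehnerFormula} can then be recast as an SDP by Schur-complementing $Z^{-1}$.
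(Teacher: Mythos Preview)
The paper does not prove Proposition~\ref{prop:LehnerFormula}; it merely states Lehner's formula with citations to~\cite{LehnerFormula1999} and~\cite[Lemma~2.4]{bandeira2023free}, so there is no in-paper argument to compare your proposal against.

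As a standalone sketch, your approach is the standard one and is essentially correct. The reduction to the sign-free identity via the symmetry $(s_1,\dots,s_n)\stackrel{d}{=}(-s_1,\dots,-s_n)$ is clean, and the upper bound by plugging $Z=G(z)$ into the matrix Dyson equation is exactly how this direction goes. For the lower bound, your monotone iteration $Z_{j+1}=\mathcal{T}(Z_j)$ does work: from $Z_1=\mathcal{T}(Z_0)\preceq Z_0$ and operator monotonicity of $\eta$ and of inversion one gets a decreasing sequence bounded below by $0$, and since $zI-A_0-\eta(Z_j)\succeq zI-A_0-\eta(Z_0)\succ 0$ throughout, the limit $G^\star$ is strictly positive and solves the Dyson equation at $z$.

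You correctly identify the genuine content as the ``resolvent characterization'': that a positive solution to $M=(zI-A_0-\eta(M))^{-1}$ exists only for $z\ge\lambda_{\max}(\Xfree)$. This is where the real work lies, and your proposal defers it rather than proves it. Lehner's original argument and the treatment in~\cite{bandeira2023free} handle this via the analytic structure of the $M_d$-valued Stieltjes transform (positivity of the imaginary part on the upper half-plane and boundary behavior). If you want a self-contained proof, that is the step you would need to fill in; the Perron--Frobenius heuristic you mention is not quite enough on its own.
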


\begin{remark}[Asymptotic Freeness, Strong Convergence, and Operator Spaces]
The asymptotic freeness phenomenon has important implications in operator algebras~\cite{HAAGERUPetal99,HaagerupThorbjornsen2005,HAAGERUPetal06}, in particular the seminal paper of Haagerup and Thorbj{\o}rnsen~\cite{HaagerupThorbjornsen2005} (that proved~\eqref{eq:HT05-SAF}) settled an important open question in operator algebras. 
In a certain sense, these results show that certain algebras of interest (such as the ones generated by a semicircular family) are well approximated by finite dimensional objects. From the viewpoint of random matrix theory the key consequence is of opposite nature: one can often perform computations directly with the limiting objects, and asymptotic freeness then provides a powerful bridge to transfer such computations to finite (but large) dimensional random matrices.
We take the opportunity to point the reader to a new line of work on establishing strong convergence in a variety of random matrix contexts~\cite{chen2024strongconvergenceI,chen2024strongconvergenceII}, and to an excellent survey on the strong convergence phenomenon by van Handel~\cite{vanHandelStrongConv2025} (which will also be the subject of an ICM talk in 2026).
The intrinsic freeness phenomenon that this survey aims to describe is different: our goal is to study $X=X^{(1)}$ in Definition~\ref{def:Xfree}, and not $X^{(N)}$ as $N\to\infty$. As it turns out, oftentimes $X=X^{(1)}$ already approximates $\Xfree$.
\end{remark}

\subsection{Non-Asymptotic Intrinsic Freeness:}\label{sec:nonasymptoticfreeness}

The key phenomenon that will fuel the sequel (shown by the author, Boedihardjo, and van Handel~\cite{bandeira2023free} and further refined by the author, Cipolloni, Schr\"{o}der, and van Handel~\cite{bandeira2024free2}) is the fact that, in many settings, a gaussian random matrix $X$  behaves like $\Xfree$ without the need to take $\lim_{N\to\infty}X^{(N)}$. We will show that is the case when a certain parameter $v(X)$ is small.

\begin{definition}\label{def:sigmaandv(X)}
Given a $d\times d$ matrix $X$ with jointly gaussian entries (which we will write as $X = A_0 + \sum_{k=1}^n g_kA_k$) we define the following parameters
\begin{equation}
    \sigma(X)^2 = \big\| \EE(X - \EE X)(X - \EE X)^\top \big\| \maxV \big\| \EE(X - \EE X)^\top(X - \EE X) \big\| = \left\| \sum_{k=1}^n A_kA_k^\top \right\| \maxV \left\| \sum_{k=1}^n A_k^\top A_k \right\|;
\end{equation}
\begin{equation}
    v(X)^2 = \big\| \cov(X) \big\| = \sup_{\|B\|_F = 1} \left|\Tr(B^\top A_k)\right|^2,
\end{equation}
where $\cov(X)$ denotes the $d^2\times d^2$ covariance matrix of the entries of $X$;
\begin{equation}
    \sigma_{\ast}(X)^2 = \sup_{u,v\in\mathbb{S}^{d-1}}\EE \big| u^\top (X - \EE X)v\big|^2= \sup_{u,v\in\mathbb{S}^{d-1}}\sum_{k=1}^n\big| u^\top A_kv\big|^2.
\end{equation}
\end{definition}
It is relatively straightforward to see that $\sigma_\ast(X)\leq \sigma(X)\minV v(X)$. The parameter $\sigma_\ast$ corresponds to the Lipschitz constant of $g\to \big\|A_0 + \sum_{k=1}^n g_kA_k\big\|$ and governs the tail estimates when using gaussian concentration to bound $\PP\big(\|X\| \geq \EE\|X\| + t \big)$ for $t>0$ (see~\cite[\S 9]{MDS-Book-2025}).\footnote{The fact that $\sigma_\ast(X)\leq \sigma(X)\minV v(X)$ is essentially the reason why it is generally a good strategy to focus on estimates on $\EE\|X\|$ and then use scalar concentration inequality to obtain tail estimates.}

We are now ready to present the main result, and a brief sketch of its proof.

\begin{theorem}[Intrinsic Freeness~\cite{bandeira2023free,bandeira2024free2}]\label{thm:intrinsicfreeness}

Let $X$ be a $d\times d$ random matrix with jointly gaussian entries (not necessarily centered or self-adjoint), we have
\begin{equation}
    \Big| \EE\|X\| - \|\Xfree\|\Big| \leq C\tilde{v}(X)(\log d)^{\frac34},
\end{equation}
and, for all $t\geq 0$,
\begin{equation}
  \PP\left[ \Big| \EE\|X\| - \|\Xfree\|\Big| > C\tilde{v}(X)(\log d)^{\frac34} + 
    C\sigma_\ast(X)t
   \right] \leq \exp(-t^2),
\end{equation}
where $C$ is a universal constant, $\tilde{v}(X) = \sqrt{v(X)\sigma(X)}$ and $\sigma(X),v(X),\sigma_{\ast}(X)$ are as in Definition~\ref{def:sigmaandv(X)}. Recall that $\sigma(X)\leq\|\Xfree\|\leq2\sigma(X)$.

If $X$ is self-adjoint the same inequalities hold replacing $\|X\|$, $\|\Xfree\|$ by $\lambda_{\max}(X)$, $\lambda_{\max}(\Xfree)$ or by $\lambda_{\min}(X)$, $\lambda_{\min}(\Xfree)$.

For self-adjoint $X$ we also have:
\begin{equation}
    \PP\left[ d_{\mathrm{H}}\left(\mathrm{sp}(X),\mathrm{sp}(\Xfree) \right) > C\tilde{v}(X)(\log d)^{\frac34} + 
    C\sigma_\ast(X)t \right]\leq \exp(-t^2),
\end{equation}
where $\mathrm{sp}(M)$ denotes the spectrum of $M$ and
\[
d_{\mathrm{H}}(A,B) \defeq \inf\left\{ \eps>0:\, A\subseteq B + [-\eps,\eps] \text{ and } B\subseteq A + [-\eps,\eps] \right\} ,
\]
denotes the Hausdorff distance between two subsets of the real line.
\end{theorem}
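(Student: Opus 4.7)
The plan is to compare the moments $\EE\tr X^{2p}$ with their free counterparts $(\tr\otimes\tau)(\Xfree^{2p})$, convert to norm bounds, and then add Gaussian concentration. By Hermitian dilation (Remark~\ref{rem:HermitianDilation}) we reduce to self-adjoint $X$, and (for exposition) first consider the centered case $X=\sum_{k=1}^n g_k A_k$; the general case with a deterministic $A_0$ is treated analogously by adjoining a ``dummy'' index $k=0$ with $\xi_0\equiv 1$ in the Wick expansion. Applying Wick's formula (Lemma~\ref{lemma:Wicksformula}) to $\EE\tr X^{2p}$ and the analogous formula (Proposition~\ref{prop:WicksWigner}) to $(\tr\otimes\tau)(\Xfree^{2p})$ produces the \emph{same} summand $\sum_{u\sim\nu}\tr(A_{u(1)}\cdots A_{u(2p)})$ but indexed over $\PP_2[2p]$ and $\NC_2[2p]$ respectively; hence
\begin{equation*}
    \EE\tr X^{2p} - (\tr\otimes\tau)(\Xfree^{2p}) = \sum_{\nu\in\PP_2[2p]\setminus\NC_2[2p]}\,\sum_{u\sim\nu}\tr(A_{u(1)}\cdots A_{u(2p)}),
\end{equation*}
so all the work is in bounding the contribution of crossing pair partitions.

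The technical heart of the proof is a bound of the form
\begin{equation*}
    \Big|\EE\tr X^{2p}-(\tr\otimes\tau)(\Xfree^{2p})\Big|\leq (Cp)^{3p/2}\tilde v(X)^{2p}.
\end{equation*}
The intuition: a non-crossing pair can be ``peeled'' via the completely positive map $M\mapsto\sum_k A_k M A_k$, whose operator norm is bounded by $\sigma(X)^2$, which is the mechanism behind Lemma~\ref{lemma:commutativeisworstWicks} and the $\sigma(X)^{2p}$-per-partition estimate. A crossing pair cannot be peeled; it must be evaluated as a bilinear form in the vectorized matrices $\vecf(A_k)\in\RR^{d^2}$, and such forms are controlled by $v(X)^2=\|\cov(X)\|$ rather than by $\sigma(X)^2$. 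Heuristically each crossing exchanges a factor $\sigma(X)^2$ for $v(X)^2$, so a partition with $c$ crossings contributes at most $\sigma(X)^{2(p-c)}v(X)^{2c}$. Counting partitions by their crossing number and applying a Cauchy--Schwarz balancing (the refinement of~\cite{bandeira2024free2}) produces the geometric mean $\tilde v(X)=\sqrt{v(X)\sigma(X)}$ in place of the cruder $v(X)$ rate of~\cite{bandeira2023free}. This balancing step is the main obstacle.

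For the norm bound we use $\EE\|X\|^{2p}\leq d\cdot\EE\tr X^{2p}$ (since $X^{2p}\succeq 0$) and $(\tr\otimes\tau)(\Xfree^{2p})\leq\|\Xfree\|^{2p}$, which together with subadditivity of $(\cdot)^{1/(2p)}$ yield
\begin{equation*}
    (\EE\|X\|^{2p})^{1/(2p)}\leq d^{1/(2p)}\big(\|\Xfree\|+(Cp)^{3/4}\tilde v(X)\big).
\end{equation*}
Choosing $p\asymp\log d$ absorbs $d^{1/(2p)}$ into a constant, and the resulting multiplicative estimate is upgraded to the sharp additive comparison $\EE\|X\|-\|\Xfree\|\lesssim \tilde v(X)(\log d)^{3/4}$ by bootstrapping the moment estimate with the subleading term separated, or by interpolating between $X$ and $\Xfree$; the matching lower bound is symmetric. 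Gaussian Lipschitz concentration applied to the $\sigma_\ast(X)$-Lipschitz map $g\mapsto\|A_0+\sum g_kA_k\|$ gives $\PP(|\|X\|-\EE\|X\||>\sigma_\ast(X)t)\leq 2e^{-t^2/2}$; combined with the expectation bound, this furnishes the stated tail estimate.

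For the self-adjoint $\lambda_{\max}, \lambda_{\min}$ variants, the same moment framework applies: since $\lambda_{\max}(X)=\sup\mathrm{sp}(X)$, one can run the argument with positive-part moments or, equivalently, apply the norm comparison to $\mu I-X$ at edge values of $\mu$. The Hausdorff-distance statement is the most delicate consequence: one must show that for every $\mu\in\RR$, $\mathrm{dist}(\mu,\mathrm{sp}(X))$ and $\mathrm{dist}(\mu,\mathrm{sp}(\Xfree))$ differ by at most the quoted amount. This is achieved either by applying the $\lambda_{\max},\lambda_{\min}$ comparisons uniformly over $\mu$ in a bounded interval containing the spectrum (covering plus a priori norm bounds), or by comparing resolvents $\EE(z-X)^{-1}$ with $(z-\Xfree)^{-1}$ and invoking that closeness of resolvents in operator norm transfers to closeness of spectra in Hausdorff distance. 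The principal obstacle throughout remains the crossing analysis, and in particular extracting the sharp $\sqrt{v\sigma}$ rate --- it is here that the geometric-mean interpolation between the two competing scales is genuine and non-trivial.
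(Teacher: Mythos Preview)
Your direct combinatorial route (expand both $\EE\tr X^{2p}$ and $(\tr\otimes\tau)(\Xfree^{2p})$ by Wick and subtract, leaving only crossing partitions) is different from what the paper actually does. The paper does not count crossings directly; it builds the interpolating family
\[
X_q^{(N)} = A_0\otimes I + \sqrt{q}\sum_i A_i\otimes D_i^{(N)} + \sqrt{1-q}\sum_i A_i\otimes W_i^{(N)},
\]
computes $\tfrac{d}{dq}\EE\tr(X_q^{(N)})^{2p}$ by Gaussian interpolation, and bounds the derivative by the alignment parameter $w(X)$, which in turn satisfies $w(X)\le\sqrt{\sigma(X)v(X)}=\tilde v(X)$. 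So the $\sqrt{v\sigma}$ rate already appears in~\cite{bandeira2023free} via this alignment bound, not via a ``Cauchy--Schwarz balancing'' on crossing counts, and it is not the refinement due to~\cite{bandeira2024free2}. Your heuristic ``each crossing swaps $\sigma^2$ for $v^2$'' captures the right intuition but would require substantial work to make rigorous; the interpolation bypasses that combinatorics entirely.

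There is also a genuine gap in your outline, precisely at the sentence ``the matching lower bound is symmetric.'' It is not. From the moment comparison you correctly get $(\EE\tr X^{2p})^{1/2p}\le \|\Xfree\|+(Cp)^{3/4}\tilde v(X)$ using $(\tr\otimes\tau)(\Xfree^{2p})\le\|\Xfree\|^{2p}$. For the reverse inequality you would need the matching bound $\|\Xfree\|\le C\big((\tr\otimes\tau)(\Xfree^{2p})\big)^{1/2p}$ for $p\asymp\log d$. For a $d\times d$ matrix this is automatic because $\tr M^{2p}\ge d^{-1}\|M\|^{2p}$, but $\Xfree$ is an infinite-dimensional operator and there is no a priori reason its spectral measure cannot place mass $e^{-d}$ near the edge (see Figure~\ref{fig:bad}). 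Ruling this out is exactly the regularity result that is the main technical contribution of~\cite{bandeira2024free2}; without it you cannot obtain the two-sided estimate, the $\lambda_{\min}$ statement, or the Hausdorff-distance conclusion. The paper handles the latter by interpolating resolvent moments $\EE\tr|zI-X|^{-2p}$ rather than just $\EE\tr X^{2p}$, and then invoking this regularity to convert back to $\mathrm{sp}(\Xfree)$. Your two suggested routes for the Hausdorff statement both implicitly assume this regularity and so do not close the gap.
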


\begin{remark}[When to use Theorem~\ref{thm:intrinsicfreeness}]
Theorem~\ref{thm:intrinsicfreeness} is useful when $v(X)\ll \sigma(X)/ (\log d)^{\frac32}$ as in that case, since $\sigma(X)\leq\|\Xfree\|\leq 2\sigma(X)$, all terms with a universal constant become negligible. Fortunately, this appears to be fairly common (you can see several applications in~\cite{bandeira2023free,bandeira2024free2}). For example, for standard Wigner matrices we have $\sigma(X)=1$ and $v(X)=\sqrt{\frac2d}$. For $X$ a self-adjoint gaussian random matrix with otherwise independent entries\footnote{The case of independent entries can be studied with other tools~\cite{bandeira2016sharp,latala2018dimension} that allow sparser matrices, we just mention it here for illustrative purposes.} where, for $i\leq j$, $X_{ij}\sim\NNN(0,1)$ if $|i-j|\leq B$ and $X_{ij}=0$ if $|i-j|> B$ we have $\sigma(X) = \sqrt{B}$ and $v(X)=2$, meaning that $v(X)\ll \sigma(X)/ (\log d)^{\frac32}$ as long as $B \gg (\log d)^3$. Another interesting model is that of Pattern Matrices~\cite[\S 3.2.1]{bandeira2023free}, standard Wigner matrices where sets of entries where conditioned to be equal, in the most interesting case in which the patterns of equal entries only have at most one entry per row or column, $v(X)\ll \sigma(X)/ (\log d)^{\frac32}$ holds as long as the largest set of equal entries has size $\ll d/(\log d)^3$. We remark that most often Theorem~\ref{thm:intrinsicfreeness} is used in tandem with Proposition~\ref{prop:LehnerFormula}, Lemma~\ref{lemma:pisierXfreebound}, or simply by using $\sigma(X)\leq \|\Xfree\|\leq 2\sigma(X)$ when $X$ is centered (Proposition~\ref{prop:Xfreesigma2sigma}).
\end{remark}

\begin{proofidea}[Proof sketch of Theorem~\ref{thm:intrinsicfreeness}]
    Let us start by focusing on how to show an upper bound such as $ \EE\|X\|   \leq \|\Xfree\| + C\tilde{v}(X)(\log d)^{\frac34}$ in the self-adjoint case. The key idea in~\cite{bandeira2023free} is to interpolate between $X$ and $\Xfree$. More precisely, this is done via the following random matrix, for $q\in[0,1]$:
    
    \begin{equation}
        X_q^{(N)} = A_0\otimes I + \sqrt{q}\sum_{i=1}^{n}A_i\otimes D_i^{(N)} + \sqrt{1-q}\sum_{i=1}^{n}A_i\otimes W_i^{(N)},
    \end{equation}
    where $I$ is an $N\times N$ identity matrix, $W_1^{(N)},\dots,W_n^{(N)}$ are iid standard 
    Wigner matrices and $D_1^{(N)},\dots,D_n^{(N)}$ are iid $N\times N$ diagonal matrices with iid $\NNN(0,1)$ entries in the diagonal. $X_0^{(N)}=X^{(N)}$ (which, for large $N$, behaves like $\Xfree$). $X_1^{(N)}$ is a $Nd\times Nd$ block diagonal matrix with iid copies of $X$ in the diagonal blocks, in particular $\EE\tr X^p = \EE\tr \big(X_1^{(N)}\big)^p$ for all positive integers $p$.

    The derivative $\frac{d}{dq}\EE \tr \big(X_q^{(N)}\big)^{2p}$ can be computed exactly with Gaussian Interpolation (see~\cite[\S 8]{MDS-Book-2025}) and can be controlled by a matrix alignment parameter\footnote{Intuitively, because the way $\EE \tr \left(X_0^{(N)}\right)^p$ and $\EE \tr \left(X_1^{(N)}\right)^p $ differ are on crossing partitions.} $\tilde{w}(X) = \sup_N w\big(X_1^{(N)}\big)$.
    Furthermore, the alignment parameter can be controlled by the easier to compute quantity
\begin{equation}
    w(X) \leq \sqrt{\sigma(X)v(X)},
\end{equation}
and so $\tilde{w}(X) \leq \sqrt{\sigma\big(X_1^{(N)}\big)v\big(X_1^{(N)}\big)} = \sqrt{\sigma(X)v(X)}.$

After taking $N\to\infty$, this eventually results in the estimate
\begin{equation}\label{eq:tracemomentsXXfree}
    \Big| \left(\EE \tr X^{2p}\right)^{\frac1{2p}} - \left((\tr\otimes \tau) \Xfree^{2p}\right)^{\frac1{2p}}\Big| \leq 2 p^{\frac34} \tilde{v}(X).
\end{equation}

For $p\sim\log(d)$, $\left(\tr X^{2p}\right)^{\frac1{2p}}$ captures the spectrum of $X$, since $\left(\tr X^{2p}\right)^{\frac1{2p}} \leq \|X\|\leq d^{\frac1{2p}}\left(\tr X^{2p}\right)^{\frac1{2p}}$.\footnote{For any $\eps>0$, the inequality \eqref{eq:tracemomentsXXfree} for $d=\lfloor C_\eps' \log d\rfloor$ would give $ \EE\|X\| \leq (1+\eps)\|\Xfree\| + C_\eps \tilde{v}(X)(\log d)^{\frac34}$ for $C_\eps,C_\eps'$ constants depending on $\eps$, but it is possible to obtain the sharp leading order term~\cite{bandeira2023free}.}
To obtain information about $\mathrm{sp}(X)$ (and not just trace moments),~\cite{bandeira2023free} interpolates other spectral statistics, in particular moments of the resolvent $\EE \tr \big|zI - X\big|^{-2p}$.
Non self-adjoint matrices can be handled by Hermitian dilation (Remark~\ref{rem:HermitianDilation}), while tail bounds can be obtained by scalar concentration of measure (such as Gaussian concentration~\cite[\S8]{MDS-Book-2025}).

The argument above shows that spectral statistics of $X$ and $\Xfree$ are close and that spectral statistics of $X$ (such as $\EE\left(\tr X^{2p}\right)^{1/2p}$ for $p\sim\log d$) can describe the spectrum of $X$. On the other hand, $\Xfree$ is an infinite dimensional operator, it is not a priori clear that spectral statistics such as $\big(\tau( \Xfree^{2p})\big)^{1/2p}$ capture the behavior of the spectrum of $\Xfree$ for the values of $p$ the argument can handle (see Figure~\ref{fig:bad}). The main technical contribution of~\cite{bandeira2024free2} is to show that this is indeed the case, it can be viewed as a regularity guarantee for $\Xfree$, showing that the spectrum is sufficiently regular so that the spectral statistics interpolated in the argument capture $\mathrm{sp}(\Xfree)$.\footnote{As we will see in Section~\ref{sec:sharptransitions}, the fact that estimates are two sided will allows us to capture important phase transitions in problems arising in Theoretical Computer Science and Statistics that would have been impossible to capture with upper bounds alone. For example, when studying a random matrix corresponding to a spectral method in statistics or computer science, Theorem~\ref{thm:intrinsicfreeness} allows us not only to control the effects of noise, but also to show that the signal of interest is indeed visible in the spectrum (see Section~\ref{sec:sharptransitions} and Figure~\ref{fig:bbp}).}
\end{proofidea}

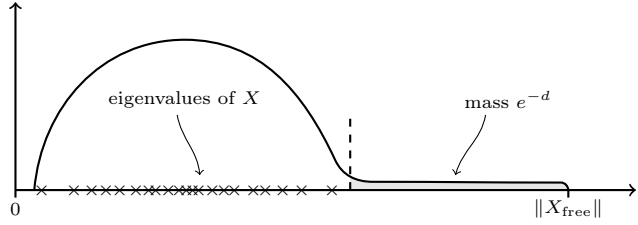
\begin{figure}[h]
\centering
\begin{tikzpicture}

\begin{scope}
\clip (4.2,0) rectangle (7.2,1);
\fill[gray!20]
(4,0) to
(4,0.4)
to[out=295,in=180] (4.5,0.11)
to[out=0,in=180] (7,0.1) to[out=0,in=95] (7.1,0)
to (4,0); 
\end{scope}

\draw[thick] (0,0) to[out=85,in=180] (2,2) 
to[out=0,in=115] (4,0.4)
to[out=295,in=180] (4.5,0.11)
to[out=0,in=180] (7,0.1) to[out=0,in=95] (7.1,0);

\draw[thick,->] (-.25,0) -- (8,0);
\draw[thick,->] (-.25,0) -- (-.25,2.5);

\draw[thick] (7.1,0) -- (7.1,-.1);
\draw[thick] (-.25,0) -- (-.25,-.1);
\draw (7.1,-.25) node {$\scriptstyle\|X_{\rm free}\|$};
\draw (-.25,-.25) node {$\scriptstyle0$};

\draw[thick,dashed] (4.2,0) -- (4.2,1);

\draw[->] (6,1) to[out=260,in=75] (5.6,0.2);
\draw (6.3,1.2) node {$\scriptstyle\text{mass }e^{-d}$};

\draw[->] (1.9,1) to[out=280,in=95] (2.2,0.2);
\draw (2,1.2) node {$\scriptstyle\text{eigenvalues of }X$};

\pgfmathsetseed{701}

\foreach \i in {0,...,10}
{
	\draw ({2*(\i/10)^(2/3)+rnd/10},0) node
	{$\scriptstyle\times$};
}
\foreach \i in {0,...,9}
{
	\draw ({4-2*(\i/10)^(2/3)-rnd/10},0) node
	{$\scriptstyle\times$};
}

\end{tikzpicture}
\caption{Illustration of a hypothetical obstruction to the validity of 
Theorem~\ref{thm:intrinsicfreeness}, where the spectral statistics used in the interpolation argument do not capture whole of the spectrum of $\|\Xfree\|$. The main result in~\cite{bandeira2024free2} can be viewed as a regularity guarantee for the spectrum of $\Xfree$ that, in particular, rules out this situation.\label{fig:bad}}
\end{figure}

\subsubsection{Universality:}\label{sec:universality}

Recently, Brailovskaya and van Handel~\cite{Tatianaetal_Universality} developed a universality principle to handle random matrices of the form
\begin{equation}\label{eq:Y_0+allYs}
    Y = Y_0+\sum_{i=1}^n Y_i,
\end{equation}
with $Y_0$ deterministic and $Y_1,\dots,Y_n$ independent and centered. They showed that $Y$ as in~\eqref{eq:Y_0+allYs} behaves, as long as all summands are sufficiently small, like a gaussian analogue $\Ygauss$ where the entries of $Y$ are replaced by gaussian random variables with the same mean and covariance.\footnote{Note that this is different from symmetrization where $Y =\sum_{i=1}^n Y_i$ would be analysed via $\sum_{i=1}^ng_i Y_i$.} The matrix $\Ygauss$ can then often be handled with the tools of intrinsic freeness.

\begin{theorem}
[\cite{Tatianaetal_Universality}]\label{thm:universalityspectrum}
    Let $Y$ be a $d\times d$ self-adjoint random matrix as in~\eqref{eq:Y_0+allYs} with $\|Y_i\|\leq R$ almost surely for all $1\leq i\leq n$. Let $\Ygauss$ be the gaussian matrix whose entries have the same mean and covariance, then
    \begin{equation}
    \PP\left[ d_{\mathrm{H}}\left(\mathrm{sp}(Y),\mathrm{sp}(\Ygauss) \right) > C\sigma_{\ast}(X)t^{\frac12}+ CR^{\frac12}\sigma(X)^{\frac23}t^{\frac23} + CRt \right]\leq d\exp(-t),
\end{equation}
for all $t\geq 0$, where $C$ is a universal constant.
\end{theorem}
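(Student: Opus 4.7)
The plan is to follow a Lindeberg / interpolation strategy at the level of smooth spectral statistics, then convert smooth-statistic closeness into Hausdorff closeness via bump functions, and finally apply concentration to obtain the three-term tail.

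\textbf{Step 1: Lindeberg replacement at the trace level.} Let $Y_1^G,\dots,Y_n^G$ be independent Gaussian matrices, each with the same mean and covariance as the corresponding $Y_i$, and define the interpolants
\begin{equation*}
Z^{(k)} \defeq Y_0 + \sum_{i\le k} Y_i^G + \sum_{i>k} Y_i, \qquad k=0,1,\dots,n,
\end{equation*}
so $Z^{(0)}=Y$ and $Z^{(n)}=\Ygauss$. For a smooth test function $g$, I would telescope
\begin{equation*}
\EE\,\Tr g(Y) - \EE\,\Tr g(\Ygauss) = \sum_{k=1}^n \EE\bigl[\,\Tr g(S_k+Y_k) - \Tr g(S_k+Y_k^G)\,\bigr],
\end{equation*}
with $S_k = Z^{(k-1)} - Y_k$ independent of the pair $(Y_k,Y_k^G)$. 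A third-order Taylor expansion of $M\mapsto \Tr g(M)$ around $S_k$ kills the zeroth, first, and second order contributions in expectation (means and covariances match by construction), leaving a remainder involving the third derivative of $g$ paired with three copies of $\Delta \in \{Y_k, Y_k^G\}$. Using $\|Y_k\|\leq R$ almost surely and a matrix H\"older pairing (one factor in operator norm, two in Hilbert--Schmidt norm) gives a per-step bound of the form $R\cdot\EE\|Y_k\|_{\mathrm{HS}}^2\cdot\|g'''\|_\infty$. Summing over $k$ and using $\bigl\|\sum_k \EE Y_k^2\bigr\|\leq\sigma(Y)^2$ yields an estimate of shape $|\EE\,\Tr g(Y) - \EE\,\Tr g(\Ygauss)| \lesssim R\,\sigma(Y)^2\,\|g'''\|_\infty$.

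\textbf{Step 2: From smooth statistics to Hausdorff distance.} I would take $g=g_\varepsilon$ a smooth bump of width $\varepsilon$ centered at a candidate ``bad'' point, with $\|g_\varepsilon\|_\infty\leq 1$ and $\|g_\varepsilon'''\|_\infty\lesssim\varepsilon^{-3}$. If $d_{\mathrm{H}}(\mathrm{sp}(Y),\mathrm{sp}(\Ygauss))>\varepsilon$, some point in one spectrum has its $\varepsilon$-neighborhood disjoint from the other, forcing $\Tr g_\varepsilon(\cdot)\geq 1$ on one side and $=0$ on the other. Step~1 then gives $1\lesssim R\sigma(Y)^2/\varepsilon^3$, i.e.\ $\EE\,d_{\mathrm{H}} \lesssim R^{1/3}\sigma(Y)^{2/3}$, after a net argument over the finitely many candidate bump positions in the relevant spectral window. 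This gives the deterministic ($t=O(1)$) part of the claimed bound.

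\textbf{Step 3: Concentration and the three-term tail.} For fixed $g=g_\varepsilon$ the random variable $\Tr g_\varepsilon(Y)$ is a function of the independent summands $Y_1,\dots,Y_n$ whose increments are bounded in operator norm by $R$ and whose conditional variance proxy is controlled by $\sigma_\ast(Y)^2$. Matrix Bernstein (Theorem~\ref{thm:matrixBernstein}), or a Talagrand/bounded-differences version applied directly to $\lambda_{\max}$, produces fluctuations of Gaussian type $\sigma_\ast(Y)\sqrt{t}$ together with a Bernstein linear tail $Rt$, at probability $d\,e^{-t}$ after a union bound over the net of Step~2. Combining these fluctuations with the Step~2 bias and re-optimizing the smoothing parameter $\varepsilon$ against $t$ produces the intermediate $R^{1/2}\sigma(Y)^{2/3}t^{2/3}$ term, which is the signature of a third-moment-driven Bernstein mechanism.

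\textbf{Main obstacle.} The hard part is the dimension-free control of the third-order Taylor remainder in Step~1: in the scalar Lindeberg argument the remainder is simply $|g'''(x)|\|\Delta\|^3$, but in the matrix setting one needs $|\Tr g'''(M)[\Delta,\Delta,\Delta]|$ bounded by $\|g'''\|_\infty$ times an operator-theoretic quantity that collapses into $\sigma(Y)^2$ after summing, rather than producing a spurious factor of $d$. This requires a careful integral representation of the third divided difference of $g$ (for instance a Helffer--Sj\"ostrand formula expressed through resolvents) together with the matrix H\"older pairing indicated above, so that the three $\Delta$-factors assemble against $\sum_k \EE Y_k^2$ instead of against $\Tr I$. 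Once this non-commutative Taylor estimate is in place, the Hausdorff conversion and the concentration steps are comparatively standard.
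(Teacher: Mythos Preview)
The paper does not prove this theorem; it is quoted from Brailovskaya and van Handel \cite{Tatianaetal_Universality} without any proof or proof sketch, so there is no ``paper's own proof'' to compare your proposal against.

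That said, your overall strategy (Lindeberg replacement on smooth spectral statistics, bump-function conversion to Hausdorff distance, concentration for the tail) is indeed the architecture of the argument in \cite{Tatianaetal_Universality}. But your Step~1 as written contains exactly the gap you flag in your ``Main obstacle'' paragraph, and you do not close it. You claim a per-step remainder of order $R\cdot\EE\|Y_k\|_{\mathrm{HS}}^2\cdot\|g'''\|_\infty$ and then assert that summing over $k$ and using $\bigl\|\sum_k\EE Y_k^2\bigr\|\le\sigma(Y)^2$ gives $R\,\sigma(Y)^2\,\|g'''\|_\infty$. These two statements are incompatible: $\sum_k\EE\|Y_k\|_{\mathrm{HS}}^2=\Tr\bigl(\sum_k\EE Y_k^2\bigr)$, which in the worst case is $d\,\sigma(Y)^2$, not $\sigma(Y)^2$. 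A spurious factor of $d$ here would destroy the entire result. The actual proof in \cite{Tatianaetal_Universality} does not bound the third-order term via a global $\|g'''\|_\infty$ estimate; it works through resolvent/cumulant identities so that the three $\Delta$-factors organize against $\sum_k\EE Y_k^2$ in operator norm rather than in trace, and this is genuinely the heart of the matter, not a routine step to be waved through.

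A smaller issue: your Step~3 claim that ``re-optimizing $\varepsilon$ against $t$'' produces the middle term $R^{1/2}\sigma^{2/3}t^{2/3}$ is not substantiated. From a bias of order $R^{1/3}\sigma^{2/3}$ (your Step~2) and fluctuations $\sigma_\ast t^{1/2}+Rt$, no amount of balancing yields an $R^{1/2}$ exponent; the correct middle term in fact arises because the Lindeberg remainder itself already carries a $t$-dependence once one works at the level of high trace moments or resolvents with a $t$-dependent spectral parameter, not from a post-hoc optimization of a fixed bump width.
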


Combining these tools with Theorem~\ref{thm:intrinsicfreeness} one obtains easy to use improvements of the matrix Bernstein inequality.

\begin{theorem}[\cite{bandeira2023free,Tatianaetal_Universality}] \label{thm:improvedBernstein}
Let $Y_1,\dots,Y_n\in\RR^{d\times d}$ be random independent symmetric matrices satisfying $\EE Y_i=0$, and such that $\|Y_i\|\leq R$, for all $i\in[n]$, almost surely. Then
\[
\EE \left\| \sum_{i=1}^n Y_i  \right\| \leq 2\sigma + C \Big( v^{\frac12}\sigma^{\frac12}(\log d)^\frac34 + R^{\frac13}\sigma^{\frac23}(\log d)^\frac23 + R \log d \Big),
\]
and
\[
\PP\left[  \left\| \sum_{i=1}^n Y_i  \right\| \geq 2\sigma + C \Big( v^{\frac12}\sigma^{\frac12}(\log d)^\frac34 + \sigma_\ast t^{\frac12}+ R^{\frac13}\sigma^{\frac23}t^\frac23 + R t \Big)    \right] \leq de^{-t}
\]
where, $C$ is a universal constant, 
\begin{equation}
\sigma^2 = \left\|  \sum_{i=1}^n \EE Y_i^2   \right\| \text{, } v^2 = \left\| \cov(Y) \right\|, R=\big\| \max_i \|Y_i\| \big\|_{\infty} \text{ and }, \sigma_\ast^2 = \sup_{\|u\|=\|w\|=1}\EE \left| v^TYw \right|^2.
\end{equation}
\end{theorem}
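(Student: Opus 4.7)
The plan is to chain three ingredients developed in this section. Write $Y=\sum_{i=1}^n Y_i$ and let $\Ygauss$ be the self-adjoint Gaussian matrix with the same mean (zero, by hypothesis) and covariance as $Y$. Since $\sigma$, $v$, and $\sigma_\ast$ depend only on the covariance structure, they coincide for $Y$ and $\Ygauss$, and in particular $\tilde v(\Ygauss)=\sqrt{v\sigma}$. Write $\Xfree$ for the free counterpart of $\Ygauss$ in the sense of Definition~\ref{def:Xfree} (taking $X := \Ygauss$).

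First, I apply Theorem~\ref{thm:universalityspectrum} to $Y$. For self-adjoint $M$ one has $\|M\|=\max_{\lambda\in\mathrm{sp}(M)}|\lambda|$, so the Hausdorff bound on the spectra directly yields $\bigl|\|Y\|-\|\Ygauss\|\bigr|\leq d_{\mathrm{H}}(\mathrm{sp}(Y),\mathrm{sp}(\Ygauss))$, and the universality tail transfers verbatim to a tail bound on this difference. Next, I apply Theorem~\ref{thm:intrinsicfreeness} to the centered Gaussian $\Ygauss$, obtaining $\bigl|\|\Ygauss\|-\|\Xfree\|\bigr|\leq C\sqrt{v\sigma}(\log d)^{3/4}+C\sigma_\ast s$ with probability at least $1-e^{-s^2}$; rescaling $s=t^{1/2}$ matches the $de^{-t}$ form of the universality tail. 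Finally, $\Ygauss$ being centered lets Proposition~\ref{prop:Xfreesigma2sigma} deliver $\|\Xfree\|\leq 2\sigma$. A union bound over the two tail events, combined with the triangle inequality $\|Y\|\leq 2\sigma+\bigl|\|Y\|-\|\Ygauss\|\bigr|+\bigl|\|\Ygauss\|-\|\Xfree\|\bigr|$, yields the stated tail estimate.

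The expectation bound then follows by tail integration via $\EE Z=\int_0^\infty\PP[Z>s]\,ds$; the dominant scale is $t\sim\log d$, where $de^{-t}$ becomes order one, producing the three deterministic contributions $v^{1/2}\sigma^{1/2}(\log d)^{3/4}$, $R^{1/3}\sigma^{2/3}(\log d)^{2/3}$, and $R\log d$. The leftover $\sigma_\ast(\log d)^{1/2}$ term coming from intrinsic freeness is absorbed into $v^{1/2}\sigma^{1/2}(\log d)^{3/4}$ using $\sigma_\ast\leq\sqrt{v\sigma}$ and $(\log d)^{1/2}\leq (\log d)^{3/4}$ (for $d\geq e$), which is why no $\sigma_\ast$ term appears in the expectation bound. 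The conceptual heavy lifting is done by Theorems~\ref{thm:universalityspectrum} and~\ref{thm:intrinsicfreeness}; at this stage the only real care needed is to reconcile the two tail forms ($de^{-t}$ versus $e^{-s^2}$) and to avoid double-counting the $\sigma_\ast$ scale that appears in both.
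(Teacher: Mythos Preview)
Your proposal is correct and follows exactly the route the paper indicates: the paper does not write out a proof of this theorem at all, merely stating that it arises by ``combining these tools with Theorem~\ref{thm:intrinsicfreeness}'' (i.e., universality plus intrinsic freeness plus the bound $\|\Xfree\|\le 2\sigma$). Your chaining of Theorem~\ref{thm:universalityspectrum}, Theorem~\ref{thm:intrinsicfreeness}, and Proposition~\ref{prop:Xfreesigma2sigma} via the triangle inequality, followed by a union bound and tail integration, is precisely the intended argument, and your handling of the bookkeeping (matching the $e^{-s^2}$ and $de^{-t}$ tails, absorbing $\sigma_\ast(\log d)^{1/2}$ into $v^{1/2}\sigma^{1/2}(\log d)^{3/4}$) is accurate.
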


Note that if $v,\sigma_\ast,R\ll \sigma / \polylog(d)$, which happens often in applications (see~\cite{bandeira2023free,Tatianaetal_Universality,bandeira2024free2}), then all terms multiplying the universal constant $C$ are negligible and the tail parameter $t$ appears only in low-order terms.

\section{Some Extensions and Applications}\label{sec:extensionsapplications}

In this section we briefly describe some extensions and applications of the intrinsic freeness phenomenon, focusing on applications that showcase how easy it is to use these methods, in particular in problems in high dimensional statistical estimation and theoretical computer science. Due to space constraints, the descriptions will be at a bird's-eye-view level and refer to the original references for more information.

\subsection{Sharp Phase Transitions:}\label{sec:sharptransitions}

There are many applications in statistics and theoretical computer science where the central object is a random matrix $X(\lambda) = \lambda Z_0 + Z$ where $Z_0$ is deterministic and corresponds to a signal of interest, $\lambda\geq0$ represents the signal-to-noise ratio (SNR) and $Z$ is a centered random matrix representing noise (or other types of data corruption). In this setting, it is usual that success of an algorithm of interest corresponds to whether $\EE X(\lambda)=\lambda Z_0$ is visible in the spectrum of $X$, or whether it is drown out by the noise $Z$.\footnote{Usually it is also important that the leading eigenvectors (or singular vectors) of $Z$ correlate with --have significant inner-product with-- leading eigenvectors (or singular vectors) of $Z_0$, often refered to as \emph{eigenvector overlap}. This tends to happen at precisely the same critical value of $\lambda$ as when the spectral norm (or leading eigenvalue) of $Z(\lambda)$ has a phase transition, so we will focus our exposition on extremal eigenvalues. The references we cite for each result also address eigenvector overlap.}

Armed with a good upper bound on $\EE\|Z\|$ (and the fact that $\|Z\|= (1\pm o(1))\EE\|Z\|$ with high probability, which usually follows from scalar concentration\footnote{In this exposition we mostly focus on bounds of expectations $\EE\|\cdot\|$ because norms of a random matrices usually concentrate significantly and standard scalar concentration techniques tend to show that, with high probability, their deviations with respect to their mean, $\EE\|\cdot\|$, are lower order. The references cited include all the precise tail bound estimates.}) one can readily obtain a lower bound on the critical level of SNR $\lambda$ for which $\lambda Z_0$ is visible in the spectrum. By Jensen's inequality, $E\| X(\lambda)\|\geq \lambda\|Z_0\|$. Usually this can be transformed into a guarantee that if, for some $\eps>0$,
\begin{equation}\label{eq:triangularineqBBP}
\lambda\geq (1+\eps)\frac{1}{\|Z_0\|}\EE\|Z\|,    
\end{equation} then the signal is visible in the spectrum, in the sense that there exists a threshold $T$ for which $\|X(\lambda)\|>T$ and $\|X(0)\|<T$ with high probability.

Unfortunately, arguments of this nature tend to be suboptimal regardless of how sharp the bounds on $\EE\|Z\|$ are. Let us describe a classical example, the spiked Wigner matrix model and the celebrated BBP transition~\cite{Johnstone2001,bbp2005,feralpeche2007}. Let $v\in\mathcal{S}^{d-1}$, the spiked Wigner matrix model is the $d\times d$ random matrix
\begin{equation}\label{eq:spikedwignerdef}
    X(\lambda) = \lambda vv^\top + W,
\end{equation}
where $W$ is a standard Wigner matrix. Since $\EE\|W\|=2(1\pm o(1))$ and $\|vv^\top\|=1$, the argument above can only guarantee that a perturbation on the spectrum of $X(\lambda)$ is visible for $\lambda \geq 2+\eps$. However, it is known that this transition happens at $\lambda=1$. To be more precise, let us define
\begin{equation}\label{eq:def:Blambda}
    B(\lambda)\defeq \begin{cases}
        2 & \text{for } \lambda \leq 1 \\
        \lambda + \frac{1}{\lambda} & \text{for } \lambda > 1.
    \end{cases}
\end{equation}
It is well known~\cite{feralpeche2007} that the largest eigenvalue of $X(\lambda)$ converges to $B(\lambda)$, showing that the phase transition happens at the critical threshold $\lambda=1$.\footnote{Moreover, this is statistical optimal in the sense that as long as the prior on $v$ is sufficiently uninformative, no statistical procedure can succeed for $\lambda<1$~\cite{bandeiraetallBBPlowerboundsAoS2018}.}

While these sharp phase transitions were only characterized for very specific random matrix ensembles (usually with i.i.d. entries, or enjoying rotational symmetry), an important consequence of the two sided bounds in Theorem~\ref{thm:intrinsicfreeness} is the fact that they allow to establish this type of sharp phase transitions in essentially any random matrix model for which Theorem~\ref{thm:intrinsicfreeness} can be used, potentially in tandem with the universality principle in Theorem~\ref{thm:universalityspectrum} (and where $\|\Xfree\|$ can be computed). It is a remarkable fact that $\Xfree$ is able to ``witness'' a low-rank perturbation in $X$ (see Figure~\ref{fig:bbp}).

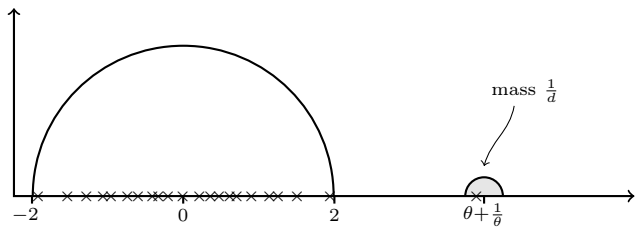
\begin{figure}[h]
\centering
\begin{tikzpicture}

\begin{scope}
\clip (-.25,0) rectangle (7.2,2.1);

\fill[gray!20] (6,0) circle (0.25);
\draw[thick] (2,0) circle (2);
\draw[thick] (6,0) circle (0.25);
\end{scope}

\draw[thick,<->] (8,0) -- (-.25,0) -- (-.25,2.5);

\pgfmathsetseed{671}

\foreach \i in {0,...,10}
{
	\draw ({1.9*(\i/10)^(2/3)+rnd/10},0) node
	{$\scriptstyle\times$};
}
\foreach \i in {0,...,9}
{
	\draw ({4-1.9*(\i/10)^(2/3)-rnd/10},0) node
	{$\scriptstyle\times$};
}

\draw (5.9,0) node {$\scriptstyle\times$};

\draw[thick] (6,0) -- (6,-.1);
\draw[thick] (2,0) -- (2,-.1);
\draw[thick] (-.01,0) -- (-.01,-.1);
\draw[thick] (4.01,0) -- (4.01,-.1);
\draw (6,-.25) node {$\scriptstyle \theta+\frac{1}{\theta}$};
\draw (-.1,-.25) node {$\scriptstyle-2$};
\draw (2,-.25) node {$\scriptstyle0$};
\draw (4.01,-.25) node {$\scriptstyle2$};

\draw[->] (6.4,1.2) to[out=260,in=75] (6,0.4);
\draw (6.55,1.4) node {$\scriptstyle\text{mass }\frac{1}{d}$};

\end{tikzpicture}
\caption{Illustration of how Theorem~\ref{thm:intrinsicfreeness} can capture the celebrated BBP transition~\cite{feralpeche2007,bbp2005} in the spiked Wigner model: $X(\lambda) = \lambda vv^\top + W$, where $W$ is a standard Wigner matrix and $v\in\mathbb{S}^{d-1}$ is fixed. Even though $\|\lambda vv^\top\|>\|W\|$ would require $\lambda>2$, it is known that the largest eigenvalue of $X(\lambda)$ enjoys a phase transition at $\lambda=1$. This phenomenon is visible on the spectrum of $\Xfree(Y)$, depicted here (the semi-circles depict the spectrum of $\Xfree$ and the $\times$'s that of a draw of the spiked Wigner matrix model. This phenomenon illustrates how we are making use of free probability in a non-asymptotic way, as if we took the asymptotic limit $d\to\infty$ the rank-1 perturbation would not be visible in the weak convergence of the spectrum. \label{fig:bbp}}
\end{figure}

A particularly elegant class of examples is the isotropic case, where $\EE(X-\EE X)^2= \sigma(X)^2 I$.

\begin{theorem}[\cite{bandeira2024free2}]\label{thm:isotropicbbpXfree}
    Let $X$ be a $d\times d$ self-adjoint gaussian random matrix for which $\EE(X-\EE X)^2= \sigma(X)^2 I$ and for which $\EE X$ has rank $r$. If $\sigma_\ast(X)\sqrt{r}\leq 1$ then
\begin{equation}
    \big| \lambda_{\max}(\Xfree) - B\big( \lambda_{\max}(\EE X) \big)  \big| \leq 2\sigma_{\ast}(X) \sqrt{r},
\end{equation}
where $B(\lambda)$ is given by~\eqref{eq:def:Blambda}.
\end{theorem}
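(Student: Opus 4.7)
I would first rescale so that $\sigma(X)=1$ (the statement is homogeneous), and let $A_0 := \EE X$, $\mu := \lambda_{\max}(A_0)$, $P$ the orthogonal projection onto the range of $A_0$ (rank $r$), $Q := I - P$, and $\eta(M) := \sum_k A_k M A_k$ the covariance map, so that the isotropy hypothesis reads $\eta(I) = I$. Writing $P = \sum_{i=1}^r u_i u_i^\top$, the crucial estimate is on $E := \eta(P)$: for any unit $v\in\RR^d$,
\[
v^\top E v = \sum_{i=1}^r \sum_k |u_i^\top A_k v|^2 \leq r\sigma_\ast^2,
\]
which combined with $\sigma_\ast\sqrt{r}\leq 1$ gives $\|E\|\leq r\sigma_\ast^2 \leq \sigma_\ast\sqrt{r}$. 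In words, $\eta(P)$ agrees with $P$ on the signal subspace up to an additive error of size $\sigma_\ast\sqrt{r}$, and this is precisely the discrepancy that will govern the theorem.

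\textbf{Upper bound via Lehner's formula.} I would apply Proposition~\ref{prop:LehnerFormula} to $\lambda_{\max}(\Xfree)$ with the two-parameter ansatz $Z = \alpha Q + \beta P$, $\alpha,\beta>0$. Using $\eta(Q) = I - E$ and $\eta(P) = E$, direct computation gives
\[
Z^{-1} + A_0 + \eta(Z) \,=\, (\alpha^{-1}+\alpha)Q + (\beta^{-1}+\alpha)P + A_0 + (\beta-\alpha)E,
\]
which, modulo the error term of operator norm at most $|\beta-\alpha|\sigma_\ast\sqrt{r}$, is block diagonal with $Q$-block value $\alpha + \alpha^{-1}$ and $P$-block top eigenvalue $\beta^{-1}+\alpha+\mu$. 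The unconstrained infimum of $\max(\alpha+\alpha^{-1},\beta^{-1}+\alpha+\mu)$ over $\alpha,\beta>0$ is exactly $B(\mu)$; I would take $\beta = 1/(\sigma_\ast\sqrt{r})$ (so the error term contributes at most $\sigma_\ast\sqrt{r}$) together with $\alpha = 1$ for $\mu\leq 1$ and $\alpha = 1/\mu$ for $\mu>1$. This realizes the $Q$-block at $B(\mu)$ and the $P$-block at $\leq B(\mu)+\sigma_\ast\sqrt{r}$ in both regimes, yielding $\lambda_{\max}(\Xfree) \leq B(\mu) + 2\sigma_\ast\sqrt{r}$.

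\textbf{Lower bound via the matrix Dyson equation.} The matching lower bound is the harder direction; my approach would be the matrix Dyson equation (MDE) for $\Xfree$. For every $z>\lambda_{\max}(\Xfree)$ the matrix-valued Cauchy transform $G(z) := (\mathrm{id}\otimes\tau)\bigl((zI-\Xfree)^{-1}\bigr)$ is real positive definite and satisfies $G(z)[zI - A_0 - \eta(G(z))] = I$. I would make the ansatz $G(z) = g(z)Q + G_P(z)$ where $g$ is the scalar semicircular Cauchy transform ($g(z-g)=1$ for $z\geq 2$) and $G_P$ is supported on $P$. Projecting the MDE onto the $Q$-block and using $\|E\|\leq\sigma_\ast\sqrt{r}$ recovers the scalar equation $g(z-g)=1$ with an error $O(\sigma_\ast\sqrt{r})$; projecting onto $P$ yields $G_P^{-1} = (z-g)I_r - \Lambda + O(\sigma_\ast\sqrt{r})$, which is singular precisely when $z-g(z) = \mu$, i.e.\ $g(z)=1/\mu$, i.e.\ $z = \mu + 1/\mu = B(\mu)$ (for $\mu>1$; for $\mu\leq 1$ the $Q$-block controls the edge at $z=2=B(\mu)$). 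Quantifying the perturbation shows that no real positive definite solution to the MDE exists for $z < B(\mu) - 2\sigma_\ast\sqrt{r}$, which forces $\lambda_{\max}(\Xfree) \geq B(\mu) - 2\sigma_\ast\sqrt{r}$.

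\textbf{Main obstacle.} The delicate step is the quantitative MDE perturbation in the lower bound: controlling the stability (Jacobian) of the MDE uniformly in a window around $B(\mu)$ and ruling out a real positive definite solution just below that window. If this proves cumbersome, an alternative I would try is a Lehner-dual argument showing that every competitor $Z\succ 0$ in Proposition~\ref{prop:LehnerFormula} already satisfies $\lambda_{\max}(Z^{-1}+A_0+\eta(Z)) \geq B(\mu) - 2\sigma_\ast\sqrt{r}$, obtained by testing against a $Z$-dependent unit vector $v$ interpolating between the top eigenvector of $A_0$ and $\ker A_0$, together with the isotropy identity $\Tr(\eta(Z)) = \Tr(Z)$ and the same estimate $\|E\|\leq\sigma_\ast\sqrt{r}$.
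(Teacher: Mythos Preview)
The paper does not prove this theorem: it is stated as a citation to \cite{bandeira2024free2} and used as a black box, so there is no in-paper argument to compare your proposal against.

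On the merits of your proposal: the setup and the key estimate $\|\eta(P)\|\le r\sigma_\ast^2\le\sigma_\ast\sqrt r$ are correct, and your upper bound via Lehner's formula is clean and goes through as written. Two small points worth making explicit: (i) Proposition~\ref{prop:LehnerFormula} in the paper is stated for $\|\Xfree\|$, so you are implicitly invoking the one-sided version $\lambda_{\max}(\Xfree)=\inf_{Z\succ0}\lambda_{\max}\bigl(Z^{-1}+A_0+\eta(Z)\bigr)$; this is correct but should be said. (ii) In the regime $\mu>1$ you need $\alpha=1/\mu\le 1\le\beta$ so that $|\beta-\alpha|\le\beta$ and the error term is indeed bounded by $\beta\cdot r\sigma_\ast^2=\sigma_\ast\sqrt r$; this holds under the hypothesis $\sigma_\ast\sqrt r\le 1$, but say so.

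The lower bound is where the content is, and here your proposal is only a sketch with a real gap. The MDE route is legitimate in principle, but the phrase ``quantifying the perturbation shows that no real positive definite solution exists for $z<B(\mu)-2\sigma_\ast\sqrt r$'' hides the entire difficulty: the stability operator of the MDE degenerates precisely at the spectral edge, so a naive implicit-function argument will not close with the stated constant, and you have not indicated how you would handle this degeneracy. Your Lehner-dual alternative is more promising and closer in spirit to how such bounds are typically obtained (Lehner's formula is an equality, so the infimum itself already certifies a lower bound once you control every competitor $Z$), but as written you have not exhibited the test vector, nor shown how the trace identity $\Tr(\eta(Z))=\Tr(Z)$ and the bound on $\eta(P)$ combine to force $\lambda_{\max}(Z^{-1}+A_0+\eta(Z))\ge B(\mu)-2\sigma_\ast\sqrt r$ uniformly in $Z$. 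That step is the heart of the matter and remains to be supplied.
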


Combined with Theorem~\ref{thm:intrinsicfreeness} it guarantees that, as long as $\tilde{v}(X)(\log d)^{\frac34} \maxV \sigma_\ast(X) \sqrt{r} \ll \sigma(X)$, then \[\lambda_{\max}(X)=(1\pm o(1))B\big( \lambda_{\max}(\EE X) \big),\] with high probability. This result goes significantly beyond the classical spiked Wigner matrix model (for which $\sigma_\ast(X)=2/\sqrt{d}$), allowing both high rank perturbations and random matrices that do not have iid entries (such as sparse matrices~\cite{bandeira2024free2} and the Kikuchi matrix example in Section~\ref{sec:kikuchi}), and non-gaussian matrices (see~\cite{bandeira2024free2}). 

These tools also allow one to characterize phase transitions in non isotropic random matrix models. A notable example is a random matrix arising in an algorithm to do signal recovery in an inhomogeneous spike model of Pak, Ko, and Krzakala~\cite{PakKoKrzakala-Conjecture2023} (where the standard spectral method is information-theoretically suboptimal~\cite{GKKZ-InfThLB-22}). In~\cite{PakKoKrzakala-Conjecture2023} a phase transition is conjectured at a particular threshold predicted with statistical physics tools. Using Theorem~\ref{thm:intrinsicfreeness},~\cite{bandeira2024free2} proved this conjecture.\footnote{This conjecture was also concurrently proven in~\cite{MergnyKoKrzakala-Solution2024}, although the proof using Theorem~\ref{thm:intrinsicfreeness} provides a stronger version where the guarantees are non-asymptotic and certain important parameters are allowed to depend on the size of the matrix.} A related example is the characterization of the critical SNR at which the spectral method in~\cite{Deshpande-cSBM2018} perform detection in the Contextual Stochastic Block Model (see~\cite{bandeira2024free2}). We refer to~\cite{bandeira2024free2} for more details and for descriptions of more applications.




\subsection{Matrix Chaos and Iterated Matrix Concentration:}\label{sec:matrixchaos}

A remarkable feature of the non-commutative Khintchine inequality is that it can be iterated~\cite[Remark 9.8.9]{Pisier2003IntroductionTO}, allowing to handle important classes of random matrices beyond the ones normally handled with matrix concentration tools, matrix chaoses. 
The approach of iterating inequalities such as~\eqref{eq:nck1} dates back, in the area of operator spaces, to~\cite{Haagerup1993BoundedLO} and (special cases) have been reinvented a several times in the literature in different application contexts (see~\cite{bandeira2025matrixchaos}). 

A gaussian \emph{matrix chaos} of order $q$ is the following random matrix model
\begin{equation}\label{eq:chaos}
    X = \sum_{\substack{i_1,\ldots,i_q\in[n] \\ i_1,\ldots,i_q \text{ distinct}}} g_{i_1}\cdots g_{i_q} A_{i_1,\dots,i_q},
\end{equation}
where $g_1,\dots,g_n$ are i.i.d. standard Gaussian, and
$A_{i_1,\ldots,i_q}$ are deterministic $d_1\times d_2$ matrix coefficients. We will represent the collection of matrix coefficients as a $q+2$ order tensor $\AAA$ where the first $q$ coordinates correspond to the chaos coordinates, and the last two to the matrix coordinates ($\AAA(i_1,\ldots,i_q,s,t)=(A_{i_1,\dots,i_q})_{s,t}$). 

The inequalities we will describe below are defined in terms of the norms of \emph{flattenings} of the tensor $\AAA$ that are defined as follows. Denote by $e_i$ the $i$th element of the standard coordinate basis, viewed as a column vector. Then for any subsets $R,C\subseteq[q+2]$, we define
the matrix
\begin{equation}\label{eq:flattenings}
    \flatta{R}{C} := \sum_{\substack{i_1, \ldots, i_q \in [m]\\i_{q+1} \in [d_1], i_{q+2} \in [d_2]}} \left(\bigotimes_{t \in R} e_{i_t}\right) \otimes \left(\bigotimes_{t \in C} e_{i_t}^\top\right) \AAA_{i_1, \ldots, i_{q+2}},
\end{equation}
where $\bigotimes$ denotes tensor product. 
This definition is easiest to interpret when $R=[q+2]\backslash C$: in this case, $\flatta{R}{C}$ is the matrix whose rows are indexed by the coordinates in the row set $R$, whose columns are indexed by the coordinates in the column set $C$, and whose entries are the corresponding entries of $\AAA$. For example, if $q=2$ and $R=\{1,3\}$, $C=\{2,4\}$, then the associated flattening $\flatta{R}{C}$ is the $md_1\times md_2$ matrix with entries
$(\flatta{R}{C})_{(i_1,i_3),(i_2,i_4)} = \AAA_{i_1,i_2,i_3,i_4}$. 

For sake of exposition, we will focus our description of the iteration procedure to $q=2$. Let $X = \sum_{i\neq j \in [n]} g_i g_j A_{ij}$. The first step is to use classical decoupling inequalities~\cite[Theorem 3.1.1]{Gin1998DecouplingFD} to show that, for $C_q$ a constant which depends only on the degree $q$ (and so in this particular case is universal), $\EE\|X\|\leq C_q \EE\|Y\|$ for $Y = \sum_{i\neq j \in [n]} g^{(1)}_i g^{(2)}_j A_{ij}$ where $g^{(1)}_1,\dots,g^{(1)}_n,g^{(2)}_1,\dots,g^{(2)}_n$ are i.i.d. standard Gaussians. For decoupled chaoses the square-free condition can be dropped, so we will focus on understanding 
\(
    \EE\big\| \sum_{i,j \in [n]} g^{(1)}_i g^{(2)}_j A_{ij} \big\|.
\)

It is useful to rewrite the parameter $\sigma$ in 
Theorem~\ref{thm:NCK-Ak}. Note that
\[
\left\| \sum_{k=1}^n A_k^\top A_k \right\|^\frac12 = \left\|\begin{bmatrix} A_1 \\ \vdots \\ A_n\end{bmatrix}^\top
\begin{bmatrix} A_1 \\ \vdots \\ A_n\end{bmatrix}\right\|^\frac12 = \left\|\begin{bmatrix} A_1 \\ \vdots \\ A_n\end{bmatrix} \right\| =    \left\| \sum_{i\in[n]} e_i\otimes A_i \right\|,
    \]
and, analogously,
\(
\left\| \sum_{k=1}^n A_k A_k^\top \right\|^\frac12
=  \left\|  \begin{bmatrix} A_1 & \cdots & A_n\end{bmatrix}\right\|
 = \left\| \sum_{i\in[n]} e_i^\top\otimes A_i\right\|, \)
which means we can write $\sigma = \left\| \sum_{i\in[n]} e_i\otimes A_i \right\| \maxV \left\| \sum_{i\in[n]} e_i^\top\otimes A_i\right\|.$
This and the tower property of the expectation allows us to iterate~\eqref{eq:nck1}:
\begin{eqnarray*}
\EE\left\| \sum_{i \in [n]} g^{(1)}_i \left( \sum_{j\in [n]} g^{(2)}_j A_{ij}\right) \right\|
&\lesssim& \sqrt{\log d}\ 
\EE_{g^{(1)}} \left(\,  \left\| \sum_{j\in[n]} e_j\otimes \left( \sum_{i \in [n]} g^{(1)}_i A_{ij}\right) \right\| \maxV \left\| \sum_{j\in[n]} e_j^\top\otimes \left( \sum_{i \in [n]} g^{(1)}_i A_{ij}\right)\right\| \,\right)\\
&\lesssim& \sqrt{\log d}\ \EE_{g^{(1)}}
 \left(\,  \left\| \sum_{i\in[n]}g^{(1)}_i \left( \sum_{j \in [n]} e_j \otimes A_{ij}\right)  \right\| \maxV\left\| \sum_{i\in[n]}g^{(1)}_i \left( \sum_{j \in [n]} e_j^\top \otimes A_{ij}\right)  \right\| \,\right),
\end{eqnarray*}
where we used~\eqref{eq:nck1} on $g^{(2)}$ and $\EE_{g^{(1)}}$ denotes expectation with respect to $g^{(1)}$. Each of the terms in the right-hand-side is the norm of gaussian matrix, so we can use~\eqref{eq:nck1} again (together with the fact that expectation of the maximum is smaller than sum of expectations) and obtain
\[
\EE\left\| \sum_{i,j \in [n]} g^{(1)}_i g^{(2)}_j A_{ij} \right\| \lesssim (\log d) \ \Big( \left\| \flatta{\{1,2,3\}}{\{4\}} \right\| \maxV\ \left\|
\flatta{\{1,3\}}{\{2,4\}}\right\| \maxV\ \left\|
\flatta{\{2,3\}}{\{1,4\}}\right\| \maxV\ \left\|
\flatta{\{3\}}{\{1,2,4\}}\right\|  \Big).
\]
In general, this process can be iterated $q$ times (see~\cite{bandeira2025matrixchaos}) and yields $\EE\|X\| \lesssim (\log d)^{\frac{q}2} \sigma(\AAA)$ where
\begin{equation}\label{eq:sigmadefinition}
    \sigma(\AAA) \defeq \max_{\substack{R=[q+2]\backslash C\\q+1\in R, q+2\in C}} \left\|\flatta{R}{C}\right\|.
\end{equation}


In~\cite{bandeira2025matrixchaos}, the author, Lucca, Nizi\'{c}-Nikolac, and van Handel realized that the inequalities in Theorem~\ref{thm:intrinsicfreeness} can also be iterated. The key insight is that the parameter $v(X)$ also corresponds to a flattening
\[
v\left( \sum_{i\in[n]}g_iA_i\right) = \left\| \cov\left( \sum_{i\in[n]}g_iA_i\right) \right\|^{\frac12} =\left\| \sum_{i\in[m]} e_i\otimes \vecf(A_i)^\top \right\| =
    \left\|\begin{bmatrix}\vecf(A_1)^\top \\ \vdots \\ \vecf(A_m)^\top\end{bmatrix}
    \right\|,
\]
where the original matrix dimensions are both taken to column indices. This allows~\cite{bandeira2025matrixchaos} to obtain inequalities for the spectral norm of matrix chaoses where the dimension dependency appears in a potentially negligible term.

\begin{theorem}[\cite{bandeira2025matrixchaos}]\label{thm:iteratedfreenck}
    Let $X$ be a matrix chaos as in \eqref{eq:chaos}. Then
    \begin{equation*}
        \EE\|Y\| \lesssim_q 
        \left(\sigma(\AAA) + \log(d_1+d_2+m)^{\frac{q+2}{2}} v(\AAA)\right),
    \end{equation*}
    where 
\begin{equation}\label{eq:vdefinition}
    v(\AAA) \coloneqq \max_{\substack{R= [q+2]\backslash C\\q+1, q+2\in C\\R \neq \emptyset}} \left\|\flatta{R}{C}\right\|.
\end{equation}
\end{theorem}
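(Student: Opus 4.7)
The plan is to proceed by induction on $q$. For the base case $q=1$, Theorem~\ref{thm:intrinsicfreeness} combined with Proposition~\ref{prop:Xfreesigma2sigma} gives $\EE\|X\| \leq 2\sigma(X) + C(\log D)^{3/4}\sqrt{v(X)\sigma(X)}$ where $D = d_1+d_2+m$. A scaled AM-GM step $\sqrt{v\sigma}\leq \epsilon\sigma + v/(4\epsilon)$ with $\epsilon \asymp (\log D)^{-3/4}$ absorbs the extra $(\log D)^{3/4}$ into the coefficient of $\sigma$, yielding $\EE\|X\| \lesssim \sigma(\AAA) + (\log D)^{3/2}v(\AAA)$, which matches $(q+2)/2 = 3/2$.

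For the inductive step I would first apply standard gaussian decoupling \cite[Theorem 3.1.1]{Gin1998DecouplingFD} to reduce to a decoupled chaos $Y = \sum g^{(1)}_{i_1}\cdots g^{(q)}_{i_q}A_{i_1,\dots,i_q}$. Conditioning on $g^{(1)},\dots,g^{(q-1)}$, $Y$ is a gaussian matrix in $g^{(q)}$, to which Theorem~\ref{thm:intrinsicfreeness} applies; taking outer expectations and using Cauchy–Schwarz gives
\begin{equation*}
    \EE\|Y\| \leq 2\,\EE[\sigma'] + C(\log D)^{3/4}\sqrt{\EE[v']\EE[\sigma']},
\end{equation*}
where $\sigma' = \sigma(Y\mid g^{(1)},\dots,g^{(q-1)})$ and $v' = v(Y\mid g^{(1)},\dots,g^{(q-1)})$. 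The crucial observation is that both $\sigma'$ and $v'$ are spectral norms of order $q-1$ gaussian chaoses, arising from explicit reshapings of $\AAA$. The horizontal and vertical stackings $[B_{i_q}]$ and $[B_{i_q}^{\top}]^{\top}$ that yield $\sigma'$ correspond to reshapings whose flattenings keep the matrix indices $q+1,q+2$ on opposite sides of any partition, so they are all controlled by the collection defining $\sigma(\AAA)$ together with (subleading) flattenings in $v(\AAA)$. By contrast, the vec-stacking $[\vecf(B_{i_q})]$ that yields $v'$ forces \emph{both} matrix indices $q+1,q+2$ into the column set of every flattening, so every flattening arising from bounding $v'$ lies in the collection defining $v(\AAA)$.

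Applying the induction hypothesis to $\sigma'$ yields $\EE[\sigma'] \lesssim \sigma(\AAA) + (\log D)^{(q+1)/2}v(\AAA)$. For $\EE[v']$ I would switch to \emph{iterated non-commutative Khintchine} (Theorem~\ref{thm:NCK-Ak} applied $q-1$ times, as in~\cite[Remark~9.8.9]{Pisier2003IntroductionTO}), which gives $\EE[v']\lesssim (\log D)^{(q-1)/2}v(\AAA)$. Substituting, bounding $\sqrt{a+b}\leq\sqrt{a}+\sqrt{b}$, and applying a scaled AM-GM $\sqrt{v\sigma}\leq \epsilon\sigma + v/(4\epsilon)$ with $\epsilon \asymp (\log D)^{-(q+2)/4}$ to absorb the resulting $(\log D)^{(q+2)/4}$ prefactor into $\sigma(\AAA)$, one checks that the largest logarithmic exponent among all surviving terms is exactly $(q+2)/2$, completing the induction.

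The main obstacle is the choice of which inequality to iterate on which branch. Iterating Theorem~\ref{thm:intrinsicfreeness} on both sub-problems yields a suboptimal $(\log D)^{3q/2}$, because each $v$-peel inflates through AM-GM to contribute $(\log D)^{3/2}$. The essential insight, consistent with the view of~\cite{bandeira2025matrixchaos} that $v(X)$ is itself a flattening norm, is that intrinsic freeness should be used only on the $\sigma$-branch where sharpness of the leading term matters, while on the $v$-branch one should fall back to the older non-commutative Khintchine inequality: since the leading term there is already of order $v(\AAA)$, the extra precision of intrinsic freeness is wasted and its $(\log D)^{3/4}\sqrt{v\sigma}$ overhead is avoided, saving exactly one unit of logarithmic exponent per iteration and giving the additive exponent $(q+2)/2$. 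The remaining bookkeeping, confirming that each flattening produced under a $\sigma$- or $v$-peel indeed belongs to the sets defining $\sigma(\AAA)$ or $v(\AAA)$, is routine but needs to be done with some care.
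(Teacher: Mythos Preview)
Your proposal is correct and is a faithful fleshing-out of what the survey sketches. The paper does not give a proof of Theorem~\ref{thm:iteratedfreenck} beyond recording the structural observation that $v(X)$ is itself a flattening norm and asserting that the intrinsic-freeness bound of Theorem~\ref{thm:intrinsicfreeness} ``can also be iterated,'' deferring all details to~\cite{bandeira2025matrixchaos}. Your decoupling, conditioning on $g^{(1)},\dots,g^{(q-1)}$, applying Theorem~\ref{thm:intrinsicfreeness} in the last variable, and then recognising $\sigma'$ and $v'$ as lower-order chaoses whose flattenings land in the $\sigma(\AAA)$ and $v(\AAA)$ families respectively, is exactly the mechanism the paper is pointing at.

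The one substantive detail you add beyond the survey's sketch is the hybrid choice: invoking the induction hypothesis (i.e.\ iterated intrinsic freeness) only on the $\sigma'$-branch while reverting to plain iterated non-commutative Khintchine on the $v'$-branch. You are right that a literal reading of ``iterate Theorem~\ref{thm:intrinsicfreeness}'' on both branches accumulates $(\log D)^{3/2}$ per step and gives exponent $3q/2$; since every flattening of the $v'$-reshape already has $q+1,q+2$ on the same side and hence lies in the $v(\AAA)$ family, the sharp leading term of intrinsic freeness is wasted there, and NCK's $(\log D)^{(q-1)/2}$ is cheaper. This is the right fix, and your exponent bookkeeping (the AM-GM with $\epsilon\asymp(\log D)^{-(q+2)/4}$, and the check that the residual term $(\log D)^{3/4+q/2}v$ is below $(q+2)/2$) is correct. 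The growth of the ambient dimension under each peel (to $md_1\times d_2$, $m\times d_1d_2$, etc.) only inflates $\log D$ by factors polynomial in $q$, which are absorbed in $\lesssim_q$.
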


The lower bound in~\eqref{eq:nck1} can also be iterated to show that these inequalities are essentially tight~\cite{bandeira2025matrixchaos}. Moreover, it is also possible to iterated matrix Rosenthal inequalities to obtain non-gaussian versions of these inequalities~\cite{bandeira2025matrixchaos}.
Furthermore,  
there is a large class of matrix chaoses, called \emph{of combinatorial type} where computing~\eqref{eq:sigmadefinition} and~\eqref{eq:vdefinition} amounts to a simple exercise (see~\cite{bandeira2025matrixchaos}), this includes, among others, the important example of Khatri-Rao random matrices~\cite{KhatriRao1968SoTS} (which appear in numerical linear algebra~\cite{CEMT25:Faster-Linear}) and the example briefly described in Section~\ref{sec:soschaos}.


\subsection{An Illustrative Application: The Tensor PCA Problem:}\label{sec:tensorPCA}

In this section we briefly describe a couple of illustrative applications of the inequalities above in a problem in high dimensional estimation, the Tensor Principal Component Analysis model~\cite{MontanariRichard-TensorPCA2014}.\footnote{The tensor PCA model can be viewed as a tensor version of the matrix spiked models discussed above.}
Here, we will consider a symmetric version of the problem in which the signal of interest is a point in the hypercube. Given $n,r$ and $\lambda$ (we will consider $r$ fixed and $n$ as very large), the goal is to estimate (or detect) an unknown ``signal'' $x\in\{\pm1\}^n$ (drawn uniformly from the hypercube), from ``measurements'' as follows: for $i_1<i_2<...<i_r$, 
\begin{equation}\label{eq:tensorPCA}
Y_{i_1,i_2,...,i_r} = \lambda \big(x^{\otimes r}\big)_{i_1,i_2,...,i_r} + Z_{i_1,i_2,...,i_r},
\end{equation}
where $Z_{i_1,i_2,...,i_r}$ are i.i.d. standard Gaussian (and independent from $x$). Note that $\big(x^{\otimes r}\big)_{i_1,i_2,...,i_r}=\prod_{j=1}^n x_{i_j}$.

Tensor PCA is believed to undergo a fascinating statistical-to-computational gap: without regards for computational efficiency: it is possible to estimate (or detect) $x$ for $\lambda=\tilde{\Omega}\left(n^{-(r-1)/2}\right)$; efficient algorithms, such as the Sum-of-Squares (SOS) hierarchy, are able to solve the problem at $\lambda=\tilde{\Omega}\left(n^{-r/4}\right)$; and local methods, such as gradient descent and approximate message passing succeed at $\lambda=\tilde{\Omega}\left(n^{-1/2}\right)$. Here $\tilde{\Omega}(\cdot)$ may hide constants depending on $r$ and polylogarithmic factors on $n$. Furthermore, it is conjectured that no efficient algorithm can significantly outperform the SOS threshold, giving rise to a statistical-to-computational gap (see~\cite{KuniskyWeinBandeira2022LowDegreeSurvey}). For $r=2$, the problem reduces to a matrix model and all these thresholds coincide. We point the reader to~\cite{Hopkins2015TensorPC,Hopkins2018StatisticalIA,wein2019kikuchi,KuniskyWeinBandeira2022LowDegreeSurvey} and references therein for more on each of these thresholds (see Figure~\ref{fig:TensorPCA-regimes}).

\begin{figure}[h]
    \centering
\begin{tikzpicture}[>=stealth, thick]

\draw[->] (0,0) -- (10,0);

\foreach \x in {2.5,5,7.5} {
  \draw[dashed] (\x,0.3) -- (\x,-0.3);
}

\node at (1.25,0.4) {Impossible};
\node at (3.75,0.4) {Hard};
\node at (6.25,0.4) {Easy};
\node at (8.75,0.4) {Local};

\node[below] at (2.75,-0.1) {$\lambda\sim n^{-\frac{r-1}{2}}
$};
\node[below] at (5.25,-0.15) {$\lambda\sim n^{-\frac{r}{4}}
$};
\node[below] at (7.75,-0.1) {$\lambda\sim n^{-\frac{1}{2}}
$};

\end{tikzpicture}
\caption{The conjectured statistical-to-computational gap in Tensor PCA~\eqref{eq:tensorPCA}~\cite{Hopkins2018StatisticalIA,wein2019kikuchi,KuniskyWeinBandeira2022LowDegreeSurvey}.
    \label{fig:TensorPCA-regimes}}
\end{figure}
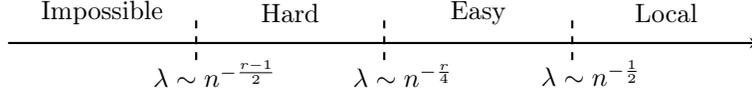

\subsubsection{Kikuchi Matrices:}\label{sec:kikuchi}

A particularly elegant algorithmic approach to tensor PCA, based on the so-called Kikuchi free energy, is due to Wein, El Alaoui, and Moore~\cite{wein2019kikuchi}. It can be viewed as a hierarchy of message passing algorithms that match the performance of the Sum-of-Squares approach, closing an important previously existing gap. We will describe here a spectral method arising from this approach~\cite{wein2019kikuchi}, based on the construction of \emph{Kikuchi Matrices}.\footnote{Kikuchi matrices, and estimates on norms of random Kikuchi matrices, have since been used to make substantial progress in important questions in combinatorics~\cite{Kotharietal-Kikuchi-Moore1-2022,HsiehKothariMohanty-Kikuchi-Moore2-2023} and in the study of locally decodable codes~\cite{Kotharietal-Kikuchi-lowerboundsLDC2023}.} For even $r$, and $\ell\in\NN$ a design parameter (with $\frac{r}2\leq \ell \ll n$), the Kikuchi matrix $M$ is the ${n \choose \ell} \times {n \choose \ell}$ matrix, whose rows and columns are indexed by $\ell$-sized subsets of $[n]$, given by

    \[
    M(\lambda)_{I,J} = \left\{ \begin{array}{ccl}
    Y_{I \Delta J} & \text{if} & |I \Delta J| = r,\\
    0 &\text{otherwise,}&
    \end{array} \right.
    \]
    where $I\Delta J = (I\cup J) \setminus (I\cap J)$ denotes the symmetric difference, and $Y$ is given by~\eqref{eq:tensorPCA}.

    The goal is to understand for which values of $\lambda$ the rank-1 spike in~\eqref{eq:tensorPCA} is visible in the leading eigenvalue of $M(\lambda)$. By symmetry we can assume, without loss of generality that $x_i=1,\ \forall_i$. In that case, $\EE M(\lambda) = \lambda A_0$ where $A_0$ is the adjacency matrix of a graph.\footnote{This graph is tightly connected to Johnson association schemes and so its spectrum can be computed, see~\cite{bandeira2024free2}.} A computation shows that this graph is $d_\ell$-regular with $d_\ell = {\ell \choose r/2}{n-\ell \choose r/2}$. Also, $\EE\big(M(\lambda)-\EE M(\lambda)\big)^2 = d_\ell\, I$. Since $A_0$ can be well approximated by a low rank matrix~\cite{bandeira2024free2}, Theorems~\ref{thm:intrinsicfreeness} and~\ref{thm:isotropicbbpXfree} can be readily used to establish the exact critical threshold for the success of this spectral method at $\lambda=\frac{1}{\sqrt{d_\ell}}\sim_r n^{-r/4}$ (for $\frac{r}2\leq  \ell\leq \frac{3r}4$, which guarantees $\tilde{v}(X)(\log d)^{\frac34} \maxV \sigma_\ast(X) \sqrt{r} \ll \sigma(X)$, where $r$ is the rank of the low-rank approximation of $A_0$). 
    Previously thresholds were only known up to logarithmic factors~\cite{wein2019kikuchi}.\footnote{See also Conjecture 9 in~\cite{RandomstrasseProblems2024}, related to the case of larger $\ell$.}

\subsubsection{Sum-of-Squares and Matrix Chaos:}\label{sec:soschaos}

Countless problems in high dimensional statistics and theoretical computer science can be written as systems of polynomial equalities and inequalities. The Sum-of-squares (SOS) hierarchy of algorithms provides a unified framework to develop algorithms to solve these problems, with a design parameter (the degree) where higher degree versions of SOS provide ever more powerful, but more computationally costly, algorithms. Remarkably, understanding for which parameter regimes problems can be solved with constant level SOS tends to render accurate statistical-to-computational gap predictions, such as the ones in Figure~\ref{fig:TensorPCA-regimes} (see Raghavendra, Schramm, and Steurer's ICM 2018 survey~\cite{Steurer-etal-ICM2018}\footnote{Sum-of-squares is also tightly connected to the low degree method for computational thresholds~\cite{Hopkins2018StatisticalIA,KuniskyWeinBandeira2022LowDegreeSurvey}}).

A particularly elegant argument in this line of work is Hopkins, Shi, Steurer's~\cite{Hopkins2015TensorPC,Hopkins2018StatisticalIA} proof that SOS of degree 6 solves the tensor PCA problem for $\lambda=\tilde{\Omega}(n^{-3/4})$. We will briefly describe how Theorem~\ref{thm:iteratedfreenck} can sharpen the analysis and remove the spurious logarithmic factor.\footnote{Interestingly, in the context of studying Quantum expanders, Lancien and Youssef~\cite{Lancien2023ANO} have also provided an estimate without spurious logarithmic factors for the same random matrix chaos, using Theorems~\ref{thm:intrinsicfreeness} and~\ref{thm:universalityspectrum}. Our goal here is to convey how Theorem~\ref{thm:iteratedfreenck} is easy to use, and to illustrate the notion of  \emph{chaos of combinatorial type}.} The key random matrix estimate in~\cite{Hopkins2015TensorPC,Hopkins2018StatisticalIA} is to bound $\EE \| \sum_{i=1}^n W_i \otimes W_i\|$, where $W_1,\dots,W_n$ are i.i.d. $d\times d$ standard Wigner matrices. After decoupling and treating the square terms separately (see~\cite{bandeira2025matrixchaos}) the resulting matrix chaos is given by
        \begin{equation*}
            Y = \sum_{i \in [n],\, j_1, k_1, j_2, k_2 \in [d]} \1_{(j_1, k_1) \neq (j_2, k_2)}\ g_{i,j_1,k_1}^{(1)} g_{i,j_2,k_2}^{(2)}\, 
            e_{j_1}\otimes e_{j_2} \otimes e_{k_1}^\top\otimes e_{k_2}^\top.
        \end{equation*}
        To illustrate the notion of a chaos of combinatorial type, let us compute the norm of one of the flattenings: 
        \begin{eqnarray*}
        \left\|
        \flatta{\{1,2,3\}}{\{4\}}
        \right\| & =& 
        \Bigg\|
        \sum_{i \in [n],\, j_1, k_1, j_2, k_2 \in [d]}
        \1_{(j_1, k_1) \neq (j_2, k_2)}\ e_i\otimes e_{j_1} \otimes e_{k_1} \otimes e_i\otimes e_{j_2} \otimes e_{k_2} \otimes \Big( e_{j_1}\otimes e_{j_2} \otimes e_{k_1}^\top\otimes e_{k_2}^\top\Big)
        \Bigg\| \\
        & \leq  & \Bigg\|
        \sum_{i \in [n],\, j_1, k_1, j_2, k_2 \in [d]}
        e_i\otimes e_{j_1} \otimes e_{k_1} \otimes e_i\otimes e_{j_2} \otimes e_{k_2} \otimes \Big( e_{j_1}\otimes e_{j_2} \otimes e_{k_1}^\top\otimes e_{k_2}^\top\Big)
        \Bigg\|\\
        & =  & \Bigg\|
        \sum_{i \in [n],\, j_1, j_2, \in [d]}
        e_i\otimes e_{j_1} \otimes e_{j_2}
        \Bigg\| = \sqrt{nd^2},
        \end{eqnarray*}
        where the key simplifications is that $e_i\otimes e_i$ can be replace by $e_i$ and $\sum_i e_i\otimes e_i^\top = I$. This gives a straightforward algorithm to compute the norm of flattenings of chaoses of combinatorial type where it suffices to count indices and whether they appear as row or column index (we refer to~\cite{bandeira2025matrixchaos} for a detailed description of this algorithm and an actual definition of \emph{combinatorial type}). After using this procedure to compute $\sigma(\AAA)$ and $v(\AAA)$ (see~\cite{bandeira2025matrixchaos}), Theorem~\ref{thm:iteratedfreenck} yields
        \[
\EE \| Y\| \lesssim d\sqrt{n} + \log(d)^2(d \maxV \sqrt{n}),
        \]
which implies that the SOS degree 6 in~\cite{Hopkins2015TensorPC,Hopkins2018StatisticalIA} succeeds at $\lambda\sim\Omega(n^{-3/4})$, without logarithmic factors.

\begin{remark}
Another important line of work is to provide lower bounds under the SOS framework. Showing that no constant-degree SOS is able to solve a problem in high dimensional statistics is considered to be very strong evidence for the computational hardness of the problem. The current leading approach to provide such lower bounds is pseudo-calibration, which involves analyzing the spectrum of a matrix chaos, usually decomposed in so-called Graph Matrices~\cite{Meka2015SumofsquaresLB,ahn2016graph,Potechin2020MachineryFP}. While the techniques in~\cite{bandeira2025matrixchaos} can be used to bound the spectral norm of graph matrices, it is not at the moment clear whether they can be used to bypass the decomposition. It appears that this would require one to understand the spectral distribution of couples matrix chaoses, not just the spectral norm. We leave this for future endeavors.
\end{remark}

\subsection{Matrix Spencer Conjecture:}\label{sec:matrixspencer}

Another notable application of Theorem~\ref{thm:intrinsicfreeness} is in the remarkable progress of Bansal, Jiang, and Meka~\cite{bansal2023matrixspencer} on the matrix Spencer conjecture.

\begin{conjecture}[Matrix Spencer~\cite{Zouzias2011AMH,Meka-WindowsOnTheory-MSpencer,Afonso_10L42P,RandomstrasseProblems2024}]\label{conj:matrixspencer}
    There exists a positive universal constant $C$ such that, for all positive integers $n$, and all choices of $n$ self-adjoint $n\times n$ real matrices $A_1,\dots,A_n$ satisfying, for all $i\in[n]$, $\|A_i\|\leq 1$ 
    the following holds
    \begin{equation}\label{eq:matrixspencerbound}
    \min_{\varepsilon \in \{\pm1\}} \left\| \sum_{i=1}^n\varepsilon_iA_i\right\|\leq C\sqrt{n}.
    \end{equation}
\end{conjecture}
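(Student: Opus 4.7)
The plan is to build on the two-step strategy that cracked the scalar Spencer theorem: produce a strong Gaussian tail bound for $X = \sum_{i=1}^n g_i A_i$, then convert it to a $\pm 1$ coloring via a partial coloring lemma (Bansal's Gaussian random walk, Lovett--Meka, or Rothvoss) iterated $O(\log n)$ times. The hope is that Theorem~\ref{thm:intrinsicfreeness} removes the spurious $\sqrt{\log n}$ factor present in the non-commutative Khintchine bound and supplies strong-enough Gaussian concentration so that the partial-coloring machinery outputs $O(\sqrt{n})$ rather than $O(\sqrt{n\log n})$.

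First I would analyze $X = \sum_{i=1}^n g_i A_i$ with $g_i$ iid $\NNN(0,1)$. From $\|A_i\|\leq 1$ we get $\sum_i A_i^2 \preceq nI$, so $\sigma(X)\leq\sqrt{n}$ and Proposition~\ref{prop:Xfreesigma2sigma} yields $\|\Xfree\|\leq 2\sqrt{n}$. Theorem~\ref{thm:intrinsicfreeness} then gives
\[
\PP\!\left[\,\|X\| > 2\sqrt{n} + C\tilde v(X)(\log n)^{3/4} + C\sigma_\ast(X)\,t\,\right] \leq e^{-t^2}.
\]
In the \emph{benign regime} $v(X)\ll \sqrt{n}/(\log n)^{3/2}$, this yields $\EE\|X\|\leq (2+o(1))\sqrt{n}$; Markov then gives $\PP[\|X\|\leq C\sqrt{n}] \geq \frac12 \gg 2^{-n/2}$, which is precisely the Gaussian mass condition required by partial coloring arguments. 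Feed this into such a lemma to obtain, from the fractional point $0\in[-1,1]^n$, an $x\in[-1,1]^n$ with at least $n/2$ coordinates in $\{\pm 1\}$ and $\|\sum_i x_i A_i\|=O(\sqrt n)$. Iterating on the live coordinates and summing the geometric series $\sum_k \sqrt{n/2^k}=O(\sqrt{n})$ produces a full coloring satisfying~\eqref{eq:matrixspencerbound}. Crucially, both $\sigma$ and $v$ are monotone under restriction of the index set, so the benign condition is preserved along the recursion.

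The hard part, and what blocks an unconditional proof of Conjecture~\ref{conj:matrixspencer}, is the complementary regime $v(X)\sim \sigma(X)\sim\sqrt{n}$. There Theorem~\ref{thm:intrinsicfreeness} offers no improvement over Theorem~\ref{thm:NCK-Ak}, and the Gaussian analogue genuinely attains $\sqrt{n\log n}$: one really must exploit the integrality of the signs rather than relying on Gaussian concentration alone. My preferred attack would be a structural dichotomy: near-saturation of $v(X)$ forces the existence of a unit-Frobenius test matrix $B$ with $\sum_i\langle A_i,B\rangle^2\gtrsim n$, which in turn forces the $A_i$'s to align rigidly with a low-dimensional subspace encoded by $B$. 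I would then \emph{peel off} that aligned subspace, handling its contribution via a vector-discrepancy tool such as Banaszczyk's theorem, and apply intrinsic freeness to the orthogonal residual, which by construction should fall in the benign regime. Making such a peeling recursion preserve $\|A_i\|\leq 1$ uniformly and controlling the discrepancy accumulated across scales is the main obstacle; it is exactly the sort of structural surgery that the program of Bansal, Jiang, and Meka~\cite{bansal2023matrixspencer} begins to carry out, and completing it in full generality remains open.
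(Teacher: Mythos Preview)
The statement is a \emph{conjecture}; the paper explicitly says Conjecture~\ref{conj:matrixspencer} remains open and gives no proof. Your write-up is not a proof either, and you say so yourself: you correctly isolate the regime $v(X)\sim\sigma(X)\sim\sqrt n$ as the obstruction and sketch a plausible but incomplete peeling attack. So there is no ``paper proof'' to compare against; the relevant comparison is to the paper's one-paragraph discussion of~\cite{bansal2023matrixspencer}, and your summary is faithful to it---Bansal, Jiang, and Meka project $\sum_i g_iA_i$ onto a carefully chosen subspace on which Theorem~\ref{thm:intrinsicfreeness} applies while keeping that subspace large enough for partial coloring, under the extra hypothesis $\mathrm{rank}(A_i)\le n/(\log n)^3$. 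Removing that rank hypothesis is exactly the open problem.

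One technical wrinkle in your benign-regime sketch: the claim that monotonicity of $\sigma$ and $v$ under index restriction ``preserves the benign condition along the recursion'' is not literally correct. Monotonicity yields $v(X_{S_k})\le v(X)$, but the condition you need at step $k$ is $v(X_{S_k})\lesssim \sqrt{n_k}/(\log n)^{3/2}$ with $n_k=n/2^k$ shrinking geometrically, so the right-hand side eventually drops below the fixed bound $v(X)$. This is easily repaired---once $n_k$ falls to, say, $(\log n)^4$ you stop and color the remaining coordinates arbitrarily at cost $n_k=o(\sqrt n)$---but as written the sentence overstates what monotonicity alone buys you.
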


We note that, by~\eqref{eq:nck1}, $\min_{\varepsilon \in \{\pm1\}} \left\| \sum_{i=1}^n\varepsilon_iA_i\right\|\lesssim \EE \left\| \sum_{i=1}^ng_iA_i\right\|\lesssim \sqrt{\log d} \left\| \sum_{i=1}^nA_i^2\right\|^{\frac12} \lesssim \sqrt{\log n} \sqrt{n}$. Furthermore, if the matrices $A_i$ commute the conjecture reduces to Spencer's seminal ``six standard deviations suffice'' theorem~\cite{Spencer1985Six} (but taking a random choice of signs is not enough). On the other extreme, we anticipate that if the matrices $A_1,\dots,A_n$ behave sufficiently ``freely'' then $\EE \left\| \sum_{i=1}^ng_iA_i\right\|\lesssim \sqrt{n}$.\footnote{The problem is already interesting in the particular case when $A_1,\dots,A_n$ correspond to the regular representation of a group on $n$ elements. It is known to hold for simple groups~\cite{BANDEIRA-CaleyGaussian2024} (by using Spencer's classical argument in commutative groups and a random choice of signs in non-commutative ones) but not known to hold for general finite groups.}  While Conjecture~\ref{conj:matrixspencer} remains open,~\cite{bansal2023matrixspencer} provided a proof in the case where the rank of each of the matrices $A_k$ is at most $\frac{n}{(\log n)^3}$. At a high-level, Bansal et al~\cite{bansal2023matrixspencer} consider a projection of the random matrix $\sum_{i=1}^ng_iA_i$ to a particular subspace where Theorem~\ref{thm:intrinsicfreeness} can be used (showing that $\sum_{i=1}^ng_iA_i$ behaves freely in that subspace) while being high-dimensional enough to still guarantee the validity of~\eqref{eq:matrixspencerbound}.

\section*{Acknowledgments.}

I would like to express my gratitude to all my collaborators, in particular the ones with whom I have collaborated in this line of work: March Boedihardjo, Ramon van Handel, Giorgio Cipolloni, Dominik Schr\"{o}der, Kevin Lucca, and Petar Nizi\'{c}-Nikolac. A special thanks to Ramon, with whom I have been thinking about random matrices for over a decade.
I would also like to thank Joel Tropp for posing the problem of understanding the dimensionality dependence in matrix concentration in a workshop in Oberwolfach in 2014~\cite{GrossKrahmerWardWinter2014OWR}. I was a graduate student in the workshop at the time (and a user of these inequalities), and that question sparked my interest in this topic.
Last but not least, I would like to thank 
Chiara Meroni
and Almut R\"{o}dder who, together with my collaborators mentioned above, made many comments and suggestions that greatly improved this manuscript.

\bibliographystyle{alpha}
\bibliography{icm-afonso}

\end{document}